\newsavebox{\@brx}
\newcommand{\llangle}[1][]{\savebox{\@brx}{\(\m@th{#1\langle}\)}%
  \mathopen{\copy\@brx\kern-0.5\wd\@brx\usebox{\@brx}}}
\newcommand{\rrangle}[1][]{\savebox{\@brx}{\(\m@th{#1\rangle}\)}%
  \mathclose{\copy\@brx\kern-0.5\wd\@brx\usebox{\@brx}}}
\DeclareFontFamily{OML}{rsfs}{\skewchar\font'177}
\DeclareFontShape{OML}{rsfs}{m}{n}{ <5> <6> rsfs5 <7> <8> <9> rsfs7
  <10> <10.95> <12> <14.4> <17.28> <20.74> <24.88> rsfs10 }{}
\DeclareMathAlphabet{\mathfs}{OML}{rsfs}{m}{n}
\newcommand{\interior}{\operatorname{int}}
\newtheorem{theorem}{Theorem}
\newtheorem{ltheorem}{Theorem} 
\newtheorem{lemma}[theorem]{Lemma}
\newtheorem{proposition}[theorem]{Proposition}
\newtheorem{corollary}[theorem]{Corollary}
\newtheorem*{claim}{Claim}
\theoremstyle{definition}
\theoremstyle{remark}
\newtheorem{remark}[theorem]{\bf Remark}
\numberwithin{equation}{section}
\numberwithin{theorem}{section}
\newcommand{\intav}[1]{\mathchoice {\mathop{\vrule width 6pt height 3 pt depth  -2.5pt
\kern -8pt \intop}\nolimits_{\kern -6pt#1}} {\mathop{\vrule width
5pt height 3  pt depth -2.6pt \kern -6pt \intop}\nolimits_{#1}}
{\mathop{\vrule width 5pt height 3 pt depth -2.6pt \kern -6pt
\intop}\nolimits_{#1}} {\mathop{\vrule width 5pt height 3 pt depth
-2.6pt \kern -6pt \intop}\nolimits_{#1}}}
\newcommand{\intavl}[1]{\mathchoice {\mathop{\vrule width 6pt height 3 pt depth  -2.5pt
\kern -8pt \intop}\limits_{\kern -6pt#1}} {\mathop{\vrule width 5pt
height 3  pt depth -2.6pt \kern -6pt \intop}\nolimits_{#1}}
{\mathop{\vrule width 5pt height 3 pt depth -2.6pt \kern -6pt
\intop}\nolimits_{#1}} {\mathop{\vrule width 5pt height 3 pt depth
-2.6pt \kern -6pt \intop}\nolimits_{#1}}}
\newcommand{\vertiii}[1]{{\left\vert\kern-0.2ex\left\vert\kern-0.2ex\left\vert #1 
    \right\vert\kern-0.2ex\right\vert\kern-0.2ex\right\vert}}
\newcommand{\un}{\underline}
\newcommand{\ve}{\varepsilon}
\newcommand{\vf}{\varphi}
\newcommand{\R}{\mathbb{R}}
\begin{document}

\title[Unique MME for geodesic flows on surfaces]{Uniqueness of the measure of maximal\\ 
entropy for geodesic flows on surfaces}

\author{Yuri Lima,  Davi Obata and Mauricio Poletti}

\address{Instituto de Matemática e Estatística, Universidade de São Paulo, Rua do Matão, 1010, Cidade Universitária, 05508-090. São Paulo -- SP, Brazil}
\email{yurilima@gmail.com}
\address{Department of Mathematics, Brigham Young University, Provo, Utah, 84602, USA}
\email{davi.obata@mathematics.byu.edu}
\address{Departamento de Matem\'atica, Universidade Federal do Cear\'a (UFC), Campus do Pici,
Bloco 914, CEP 60455-760. Fortaleza -- CE, Brasil}
\email{mpoletti@mat.ufc.br}

\date{\today}

\begin{abstract}
We prove that if a geodesic flow on a closed orientable $C^\infty$ surface is transitive and
has positive topological entropy, then it has a unique measure of maximal entropy.
This covers all previous results
of the literature on the uniqueness of the measure of maximal entropy in this context,
as well as it applies to new examples such as the ones constructed by Donnay and Burns-Donnay.
We also prove that, in the above context, there is at most one SRB measure.

\end{abstract}

\maketitle

\section{Introduction}

Entropy quantifies the complexity of a dynamical system. Among its various notions,
two of particular interest in dynamical systems are the {\em topological entropy} and the {\em metric entropy}
(also known as the Kolmogorov-Sina{\u\i} entropy). The celebrated {\em variational principle} asserts that,
under some regularity assumptions, the topological entropy equals the supremum of the metric entropies
over all invariant probability measures. An invariant probability measure that attains this supremum is called a \emph{measure of maximal entropy}. Understanding  conditions that ensure its existence, finiteness,
and uniqueness has been a theme of intense research over the past several decades.
In this paper, we provide sufficient conditions for the uniqueness of the measure of maximal
entropy for geodesic flows on surfaces.
Given a closed orientable $C^\infty$ Riemannian surface $(S,g)$, where $g$ is a Riemannian metric, let $M=T^1S$ denote its unit tangent bundle
and $\varphi:M\to M$ be the geodesic flow on $(S,g)$. Let $h_{\rm top}(\vf)$ denote the topological entropy
of $\vf$. We prove the following.

\begin{ltheorem}\label{thm.geodesic}
If $\varphi$ is transitive and $h_{\rm top}(\vf)>0$, then it has a 
unique measure of maximal entropy $\mu$, which is Bernoulli. If additionally there is a
$\varphi$--invariant ergodic probability measure with positive entropy and full support,
then $\mu$ has full support.
\end{ltheorem}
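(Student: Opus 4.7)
The plan is to reduce the problem to symbolic dynamics via countable Markov partitions for non-uniformly hyperbolic flows. Since $M = T^1 S$ is three-dimensional and $h_{\rm top}(\vf) > 0$, Katok's theorem implies that any ergodic measure of maximal entropy (MME) has positive metric entropy and is therefore hyperbolic, with one positive and one negative Lyapunov exponent transverse to the flow direction. By the Lima--Sarig symbolic coding for $C^{1+\alpha}$ flows in dimension three, there exists a topological Markov flow $(\Sigma_r,\sigma_r^t)$ and a Hölder semiconjugacy $\pi$ such that every ergodic hyperbolic $\vf$-invariant probability measure lifts to a $\sigma_r^t$-invariant ergodic probability of the same entropy, supported on a single irreducible component of $\Sigma_r$.

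The second step is spectral analysis on the symbolic side. Each irreducible component of a countable Markov shift carries at most one measure of maximal entropy, and only the strongly positive recurrent (SPR) components can do so. This gives, at worst, a countable list of ergodic MMEs of $\vf$. For each, mixing of the base shift on its SPR component combined with Ornstein theory for Gibbs measures on countable Markov shifts and the Ratner--Ornstein--Weiss Bernoulli criterion for suspension flows yields the Bernoulli property at the level of the flow $\vf$.

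The heart of the argument is to promote ``at most countably many ergodic MMEs'' to uniqueness using transitivity. Assume for contradiction there are two distinct ergodic MMEs $\mu_1,\mu_2$, sitting symbolically on SPR components $\mc{C}_1,\mc{C}_2$. A generic orbit of $\mu_1$ and a generic orbit of $\mu_2$ can be approximated, using Pesin sets and Katok's shadowing, by periodic orbits $\gamma_1,\gamma_2$ of $\vf$ whose invariant manifolds project to open sets in Poincaré sections of $\Sigma_r$. Transitivity of $\vf$ then provides a single orbit visiting neighborhoods of both $\gamma_1$ and $\gamma_2$; a $\lambda$-lemma/inclination argument on suitable Poincaré sections upgrades this to a transverse heteroclinic intersection, hence a homoclinic relation between $\mu_1$ and $\mu_2$. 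As homoclinically related hyperbolic ergodic measures code into the same irreducible component, $\mc{C}_1 = \mc{C}_2$, a contradiction.

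For the final clause, if $\nu$ is an ergodic measure of positive entropy with full support, then $\nu$ is also hyperbolic, lifts symbolically, and its SPR component $\mc{C}_\nu$ projects to a $\vf$-invariant set of full $\nu$-measure, hence dense in $M$. The homoclinic-relation argument applied to $\mu$ and $\nu$ places them in the same SPR component, so $\mathrm{supp}(\mu) \supseteq \mathrm{supp}(\nu) = M$. The main obstacle I foresee is the homoclinic-relation step: converting the purely topological input of transitivity into a geometric heteroclinic intersection requires careful bookkeeping of return times and of the ``admissibility'' of the Pesin charts used to build $\Sigma_r$, since a priori $\mu_1$ and $\mu_2$ give no uniform hyperbolicity estimates, and one must verify that the merging criterion developed for surface diffeomorphisms in the Buzzi--Crovisier--Sarig framework survives the passage to a suspension flow with unbounded roof function.
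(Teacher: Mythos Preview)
Your overall architecture matches the paper's: reduce to hyperbolic measures via Ruelle's inequality, invoke the Lima--Sarig/BCL coding, and appeal to the fact that a H\"older potential has at most one equilibrium state in each homoclinic class (Theorem~\ref{thm-BCL}). The full-support clause is also handled essentially as in the paper.

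However, the step you yourself flag as ``the main obstacle'' is genuinely missing, and the paper's key contribution is precisely the mechanism that fills it. You propose to obtain a transverse heteroclinic intersection between periodic orbits $\gamma_1,\gamma_2$ from transitivity plus an inclination lemma ``on suitable Poincar\'e sections''. In three dimensions this is not enough: the inclination lemma tells you that $W^u(\gamma_1)$ accumulates on $W^u(\gamma_2)$, but gives no a priori reason for $W^u(\gamma_1)\pitchfork W^s(\gamma_2)\ne\emptyset$; the two-dimensional leaves could in principle be tangent along the forced intersection. The BCS argument you allude to is intrinsically two-dimensional: it uses $su$--quadrilaterals, the Jordan curve theorem, and a Sard-type lemma on laminations of curves to force a \emph{transverse} crossing. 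There is no evident analogue of this argument for the local Lima--Sarig sections or for the suspension flow directly.

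The paper's resolution is to invoke the Alves--Mazzucchelli theorem \cite{AM25}: every geodesic flow on a closed orientable surface admits a \emph{global} Birkhoff section $\Sigma$. This produces a transitive $C^\infty$ surface diffeomorphism $f:\interior(\Sigma)\to\interior(\Sigma)$ to which the BCS $su$--quadrilateral machinery applies verbatim (Propositions~\ref{prop-periodic-point}--\ref{prop.hom-rel}). Without this global section, your proposal does not close; with it, the merging of homoclinic classes is a two-dimensional topological fact rather than a delicate bookkeeping of Pesin charts and roof functions. You should replace your heteroclinic-relation sketch by the Birkhoff-section reduction.
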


Theorem \ref{thm.geodesic} makes no assumption on the curvature of $(S,g)$, hence
it covers all previous results on the uniqueness of the measure of maximal entropy
for geodesic flows on surfaces, and it also applies to surfaces with
conjugate points such as those constructed by Donnay \cite{Donnay-I,Donnay-II}, Burns and Marlies \cite{BurnsMarlies89}
and by Burns and Donnay \cite{Burns-Donnay}, see Figure \ref{fig-Donnay}.

 \begin{figure}[hbt!]\label{fig-Donnay}
\centering
\def\svgwidth{4.5cm}
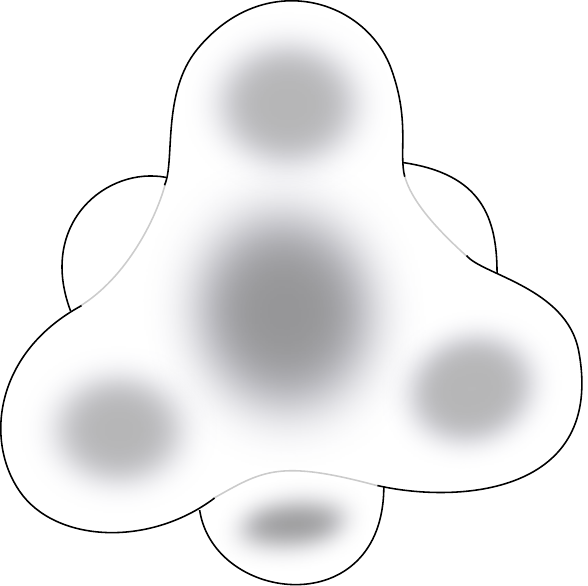\caption{A metric on the sphere $\mathbb S^2$ whose 
geodesic flow is ergodic and hyperbolic for
the Liouville measure \cite{Burns-Donnay}.}
\end{figure}

\begin{corollary}
Let $\vf$ be the geodesic flow on a closed orientable $C^\infty$ Riemannian surface $(S,g)$, 
and assume that the Liouville measure is ergodic and hyperbolic. Then $\vf$ has
a unique measure of maximal entropy, which is Bernoulli and fully supported.
\end{corollary}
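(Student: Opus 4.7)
The plan is to obtain this as a direct application of Theorem~\ref{thm.geodesic}, using the Liouville measure $\mu_L$ itself to verify each hypothesis. So the task reduces to checking: (i) $\vf$ is transitive, (ii) $h_{\rm top}(\vf) > 0$, and (iii) there exists a $\vf$-invariant ergodic probability measure with positive entropy and full support. The conclusion of Theorem~\ref{thm.geodesic} then delivers uniqueness, the Bernoulli property, and full support of the MME in one stroke.

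First I would observe that the Liouville measure $\mu_L$ on $T^1S$ is always $\vf$-invariant and has full support, because it is induced by the Riemannian volume and the hypothesis assumes $(S,g)$ is $C^\infty$ and closed. Since we assume $\mu_L$ is ergodic and fully supported, a standard argument (e.g.\ the Birkhoff ergodic theorem applied to a countable basis of open sets) shows that $\mu_L$-a.e.\ point has a dense orbit, which gives transitivity of $\vf$, settling (i) and also producing the candidate measure for (iii) once positive entropy is established.

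For the entropy bound, I would invoke the Pesin entropy formula, which holds for smooth flows preserving a smooth measure: for the geodesic flow on a surface, $h_{\mu_L}(\vf) = \int \lambda^+(v)\, d\mu_L(v)$, where $\lambda^+$ is the positive Lyapunov exponent in the direction transverse to the flow. The hyperbolicity assumption means $\lambda^+(v) > 0$ on a set of positive $\mu_L$-measure, and by ergodicity it is constant and strictly positive $\mu_L$-a.e. Thus $h_{\mu_L}(\vf) > 0$, and by the variational principle $h_{\rm top}(\vf) \geq h_{\mu_L}(\vf) > 0$, settling (ii). At the same time, $\mu_L$ is ergodic, has positive entropy, and has full support, settling (iii).

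With (i)–(iii) verified, Theorem~\ref{thm.geodesic} applies and yields a unique measure of maximal entropy $\mu$ which is Bernoulli and has full support. There is no real obstacle here; the only substantive input beyond citing Theorem~\ref{thm.geodesic} is the use of Pesin's formula to convert hyperbolicity of $\mu_L$ into positivity of its metric entropy, and the standard implication ``ergodic probability measure of full support $\Rightarrow$ topological transitivity'' for continuous flows on compact metric spaces.
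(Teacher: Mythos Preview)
Your proposal is correct and follows essentially the same approach as the paper: use the Pesin entropy formula to get $h_{\mu_L}(\vf)>0$ (hence $h_{\rm top}(\vf)>0$), use ergodicity of the fully supported Liouville measure to get transitivity, and then apply Theorem~\ref{thm.geodesic} with $\mu_L$ itself as the ergodic positive-entropy measure of full support. The paper's proof is just a two-sentence version of exactly this.
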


In particular, Donnay's examples
have a unique measure of maximal entropy, which is Bernoulli and fully supported. 

\begin{proof}
If ${\rm vol}$ is the Liouville measure, the Pesin entropy formula gives $h_{\rm vol}(\vf)>0$ 
and so $\vf$ has positive topological entropy. Since ${\rm vol}$ is ergodic, $\vf$ is transitive.
\end{proof}

Up to our knowledge, understanding which geodesic flows on surfaces are transitive and have positive
topological entropy is a general open question, with some partial results. For instance,
a $C^\infty$ generic metric on the sphere $\mathbb S^2$ has positive topological
entropy \cite{Knieper-Weiss-Generic-Positive-entropy}; a metric on $\mathbb S^2$ with
positive curvature with transitive geodesic flow has positive topological entropy \cite{Schroder},
although it is not know whether such metric exists. Due to KAM theory,  transitivity is not a generic property in this context.

The proof of Theorem \ref{thm.geodesic} uses two main ingredients that have been developed
recently by different groups. The first deals with codings of homoclinic classes of hyperbolic
measures, following Buzzi, Crovisier and Sarig \cite{BCS} and Buzzi, Crovisier and Lima \cite{BCL}.
The second deals with Birkhoff sections for geodesic flows on surfaces, 
following the works of Contreras, Mazzucchelli \cite{CM22}, Contreras et al \cite{CKMS25} and
more recently Alves and Mazzucchelli \cite{AM25}, which imply the existence of a Birkhoff section for
every geodesic flow on a surface $(S,g)$.

We indeed prove a more general result for (not necessarily geodesic) flows without fixed points on
three dimensional manifolds, for the more general class of measures known as equilibrium states.
This is our Theorem \ref{maintheorem} below, which is our main result. 
Theorem \ref{thm.geodesic} is then a direct consequence of Theorem \ref{maintheorem} and \cite{AM25}.
To explain our main theorem, we need to introduce some notation.

Let $\vf$ be a flow on a closed
three dimensional $C^\infty$ Riemannian manifold $M$.
Given a continuous potential $\psi:M\to \mathbb{R}$, the {\em topological pressure}
of $\psi$ is 
$$
P_{\rm top}(\psi)=\sup\left\{h_\mu(\varphi)+\int \psi d\mu:\mu\text{ is $\vf$--invariant probability measure}\right\},
$$
where $h_\mu(\varphi)$ denotes the metric entropy of $\vf$ with respect to $\mu$.
A $\vf$--invariant probability measure $\mu$ such that $P_{\rm top}(\psi)=h_\mu(\varphi)+\int \psi d\mu$
is called an {\em equilibrium state of $\psi$}. In particular, measures of maximal entropy are
equilibrium states of the zero potential $\psi\equiv 0$ and $h_{\rm top}(\vf)=P_{\rm top}(0)$.

\medskip
\noindent
{\sc Positive entropy potential:} We call $\psi$ a {\em positive entropy potential}
if every ergodic equilibrium state of $\psi$ has positive metric entropy.

\medskip
If $\varphi$ has positive topological entropy then $\psi\equiv 0$ is a positive entropy potential;
more generally, every $\psi$ with $\sup \psi -\inf \psi< h_{\rm top}(\vf)$ is a positive entropy potential.

\begin{ltheorem}\label{maintheorem}
Let $\vf:M\to M$ be a $C^\infty$ flow with positive speed on a closed
three dimensional $C^\infty$ Riemannian manifold $M$, and assume $\vf$ is transitive
and has a Birkhoff section. Then every H\"older continuous positive entropy potential has a unique
equilibrium state $\mu$, which is Bernoulli times a rotation (if $\vf$ is a Reeb flow, then $\mu$ is Bernoulli).

If additionally there is a $\varphi$--invariant ergodic probability measure 
with positive entropy and full support, then $\mu$ has full support.
\end{ltheorem}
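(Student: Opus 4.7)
The plan is to pass from the flow $\vf$ to the first-return map of the given Birkhoff section, reducing equilibrium states of $\psi$ for $\vf$ to equilibrium states of a derived H\"older potential for a surface diffeomorphism, and then to apply the uniqueness results of \cite{BCS} and \cite{BCL} for hyperbolic equilibrium states on surfaces, combined with transitivity to unify homoclinic classes.

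\textbf{Step 1 (Reduction to the return map).} Let $\Sigma$ be the Birkhoff section, $f\colon \interior(\Sigma)\to\interior(\Sigma)$ its first-return map, and $\tau$ the first-return time. Writing $P=P_{\rm top}(\psi)$, set
$$\Phi(x):=\int_0^{\tau(x)}\psi(\vf_t x)\,dt - P\tau(x).$$
The Abramov/suspension correspondence identifies $\vf$-invariant probability measures not concentrated on $\partial\Sigma$ with $f$-invariant probability measures of finite return time; under this bijection, equilibrium states of $\psi$ for $\vf$ correspond to equilibrium states of $\Phi$ for $f$ at pressure zero.

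\textbf{Step 2 (Hyperbolicity).} For an ergodic equilibrium state $\mu$ of $\psi$, the positive entropy potential hypothesis yields $h_\mu(\vf)>0$. Since $\vf$ has positive speed, the flow direction carries the zero Lyapunov exponent, and the two transverse exponents are $\lambda^u(\mu)\geq \lambda^s(\mu)$. Applying Ruelle's inequality to both $\vf_1$ and $\vf_{-1}$ forces $\lambda^u>0>\lambda^s$, so $\mu$ is hyperbolic. The boundary $\partial\Sigma$ is a finite union of periodic orbits (zero entropy), so $\mu(\partial\Sigma)=0$ and the corresponding $f$-invariant measure is hyperbolic.

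\textbf{Step 3 (Uniqueness).} The return map $f$ is a smooth surface diffeomorphism on $\interior(\Sigma)$, singular near $\partial\Sigma$ through the blow-up of $\tau$. Building countable Markov partitions for its hyperbolic measures (in the spirit of Sarig, Lima--Sarig, Ben Ovadia) and invoking the uniqueness theorems of \cite{BCS} and \cite{BCL}, every hyperbolic ergodic equilibrium state of $\Phi$ for $f$ is concentrated on the homoclinic class of a periodic orbit and is the unique equilibrium state in that class, Bernoulli up to a finite cyclic factor. Transitivity of $\vf$ implies that a dense orbit passes arbitrarily close to any two hyperbolic periodic orbits; hyperbolicity and the $\lambda$-lemma then produce transverse heteroclinic intersections, so all relevant periodic orbits of $f$ are pairwise homoclinically related and a single homoclinic class supports every equilibrium state, giving the desired unique $\mu$.

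\textbf{Step 4 (Bernoulli factor, Reeb case, full support).} Suspending the Bernoulli-up-to-finite-cyclic structure of the base measure under the H\"older roof $\tau$ produces the Bernoulli-times-rotation structure of $\mu$; for Reeb flows the cyclic factor is eliminated (exploiting the specific nature of the Reeb return time), yielding $\mu$ Bernoulli. For full support, an ergodic $\vf$-invariant measure of positive entropy and full support is hyperbolic by Step 2, hence lies in the unique homoclinic class of Step 3, forcing $\mathrm{supp}\,\mu=M$.

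The main obstacle is the \emph{singular} character of $f$ near $\partial\Sigma$: the return time blows up along the boundary periodic orbits, so $f$ is not a compact surface diffeomorphism and the theorems of \cite{BCS,BCL} do not apply directly to it. The core technical work is therefore to build a countable Markov coding tolerating these boundary singularities, to verify H\"older regularity of $\Phi$ relative to the symbolic metric, and to transfer the transitivity of $\vf$ to an irreducibility statement for the symbolic model, at which point \cite{BCS,BCL} deliver the unique equilibrium state.
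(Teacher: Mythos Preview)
Your proposal has a genuine gap in Step~3, and also diverges from the paper's route in a way worth understanding.

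\textbf{The gap.} You write that ``transitivity of $\vf$ implies that a dense orbit passes arbitrarily close to any two hyperbolic periodic orbits; hyperbolicity and the $\lambda$-lemma then produce transverse heteroclinic intersections.'' This is not justified. Transitivity together with the inclination lemma only guarantees that $W^u(\mathcal O_1)$ accumulates on $W^u(\mathcal O_2)$, hence comes $C^1$--close to $W^s(\mathcal O_2)$; it does \emph{not} force a transverse intersection. In a surface section the stable and unstable manifolds are curves, and nothing prevents them from meeting only tangentially. Producing transversality is precisely the hard step, and the paper (following \cite{BCS}) handles it in three stages: (i) a purely topological $su$--quadrilateral argument combined with the Jordan curve theorem forces some (a priori tangential) intersection of an unstable curve with $\partial^s Q_2$; (ii) the Ledrappier--Young entropy formula supplies a lamination of such unstable curves with positive transverse Hausdorff dimension and Lipschitz holonomy; (iii) the Sard-type result \cite[Theorem~4.2]{BCS} shows that the set of leaves meeting $\partial^s Q_2$ non-transversely has zero Hausdorff dimension, so a transverse leaf exists. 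None of this is contained in the $\lambda$-lemma, and without it the homoclinic classes need not merge.

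\textbf{The difference in strategy.} The paper does \emph{not} push the equilibrium-state problem down to $f$ via Abramov. The coding it invokes (Theorem~\ref{thm-BCL}, from \cite{BCL,LMN}) is built directly for the flow, so there is no need to control your potential $\Phi$, which in fact is neither bounded nor H\"older because $\tau$ blows up along $\partial\Sigma$. The Birkhoff section enters \emph{only} in the two-dimensional topological step (Propositions~\ref{prop-periodic-point} and~\ref{prop-topological-intersection}), providing a surface in which $su$--quadrilaterals live and the Jordan curve argument can be run. Your alternative of constructing a singular surface coding for $f$ might ultimately be made to work, but it demands exactly the machinery you yourself flag as the main obstacle; the paper sidesteps this entirely by keeping the thermodynamic formalism at the flow level and using $\Sigma$ purely as a topological device.
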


This more general result applies to the transitive Reeb flows inside the open and dense set
of Reeb flows that have a Birkhoff section and positive topological entropy
that was shown to exist by Colin et al \cite{CDHR24} (up to our knowledge,
there is no known example of a Reeb flow in dimension three without a Birkhoff section).
The Bernoulli property of $\mu$ was established in \cite{LLS}, including the case when $\vf$ is a Reeb flow.

\begin{remark}\label{rmk-admissible}
Theorem \ref{maintheorem} actually holds under more general conditions,
when $\psi$ is  a positive entropy potential such that its lift under the symbolic coding $\pi_r:\Sigma_r\to M$
constructed in \cite{BCL} is Hölder continuous in the Bowen-Walters metric, see also \cite{LMN}.
This allows to apply the result to the geometric potential,
which is not known to be Hölder in the manifold metric but it essentially is in the symbolic 
metric\footnote{
Actually, the lift of the geometric potential to the symbolic space is cohomologous
to a Hölder continuous function, see \cite[Section 8]{LLS}.}.
We will use this important fact to prove Theorem \ref {theoremSRB} below.
\end{remark}

\begin{remark}
Theorem \ref{maintheorem} also has a version for $C^r$ flows (assuming a ``high entropy'' regime), which
{\em does not} provide the existence of an equilibrium state, but gives uniqueness when it exists,
see Theorem \ref{finitereg}.
\end{remark}

Under $C^\infty$ regularity, the existence of a measure of maximal entropy follows from the work of Newhouse \cite{Newhouse-Entropy} (indeed, this holds for every continuous potential).
This paper deals with uniqueness. The question on the (existence and) uniqueness of measures of maximal entropy
for maps and flows, and in particular for geodesic flows, is a classical topic on dynamical 
systems and ergodic theory, with a long list of contributions since the early 1970s. 
In the past fifteen years, new developments have been obtained, under 
some weak forms of hyperbolicity, such as ``no conjugate points'' or ``no focal points''. 
The list of contributions below exemplifies (but is not exhaustive) the 
contributions in this direction:
\begin{enumerate}[$\circ$]
\item Uniqueness of measure of maximal entropy for transitive uniformly
hyperbolic flows \cite{Bowen-maximizing}.
\item Uniqueness of measure of maximal entropy for geodesic flows on
rank one manifolds \cite{Knieper-Rank-One-Entropy}.
\item At most countably many measures of maximal entropy for three dimensional flows with positive speed
and positive topological entropy \cite{LLS}.
\item Uniqueness of equilibrium states for a wide class of potentials,
for geodesic flows on rank one manifolds \cite{BCFT-2018}. Similar results were extended
for geodesic flows on surfaces without focal points \cite{Chen-Kao-Park}.
\item Uniqueness of measure of maximal entropy for geodesic flows on surfaces
without focal points \cite{GR19} and without conjugate points and continuous Green bundles \cite{GR23}.
\item Uniqueness of measure of maximal entropy for geodesic flows on surfaces
without conjugate points \cite{CKW21}, see also the alternative proof \cite{Mamani24}. 
\item Finiteness of measures of maximal entropy for three dimensional flows with positive speed
and positive topological entropy \cite{Yuntao25}.
\end{enumerate}
There are also recent results in high dimension \cite{MR23,Wu24,Knieper2025}.
The techniques of the cited above works vary, including symbolic dynamics, Patterson-Sullivan measures,
expansive factors, and weak notions of specification. 
Theorem \ref{thm.geodesic} includes all the above results on the uniqueness of the measure
of maximal entropy for geodesic flows on surfaces. We remark that many of these works
obtain additional properties on the unique measure of maximal entropy, such as the
equidistribution of periodic orbits, which we do not address here.
As already mentioned, our results also apply to previously unknown contexts such as the surfaces
constructed in \cite{Donnay-I,Donnay-II,Burns-Donnay}.

Assuming a similar context to Theorem \ref{maintheorem} (but not requiring $C^\infty$ regularity),
we also prove that there exists at most one SRB measure, as we now explain. 
Let $\vf$ be a $C^r$ flow ($r>1$) with positive speed on a closed
three dimensional $C^\infty$ Riemannian manifold $M$, and let
$\lambda^+(x)=\limsup\limits_{t\to \infty}\frac{1}{t}\log \|d\varphi^t(x)\|$
and $\lambda^-(x)=\liminf\limits_{t\to \infty}\frac{1}{t}\log m(d\varphi^t(x))$
denote the Lyapunov exponents of $\vf$ at $x$.\footnote{Given a 
linear transformation $A$, we denote its conorm by $m(A)$.}
We say that a $\vf$--invariant probability measure $\mu$ is hyperbolic
of saddle type if $\lambda^-(x) < 0 < \lambda^+(x)$ for $\mu$--a.e. $x$.
If additionally $h_\mu(\varphi)=\int_M \lambda^+(x)d\mu$, then $\mu$ is called
an {\em SRB measure} for $\vf$.

\begin{ltheorem}\label{theoremSRB}
Let $\vf:M\to M$ be a $C^r$ flow $(r>1)$ with positive speed on a closed
three dimensional $C^\infty$ Riemannian manifold $M$. If $\vf$ is transitive
and has a Birkhoff section, then it admits at most one SRB measure. 
\end{ltheorem}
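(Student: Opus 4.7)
The plan is to deduce Theorem \ref{theoremSRB} from the machinery of Theorem \ref{maintheorem}, in the more general form outlined in Remark \ref{rmk-admissible}, applied to the \emph{geometric potential} $\psi^u$, i.e.\ the continuous potential whose Birkhoff integral at time $t$ coincides with $-\log\|d\vf^t|_{E^u(x)}\|$ on any Pesin block where the unstable bundle is defined. The basic identification I would invoke is that, by the Margulis--Ruelle inequality, $h_\mu(\vf)+\int\psi^u\,d\mu\le 0$ for every invariant measure, and by Pesin's entropy formula together with Ledrappier's characterization, a hyperbolic ergodic saddle--type $\mu$ attains equality (i.e.\ is an equilibrium state of $\psi^u$) if and only if it is SRB. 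In that case $h_\mu=\int\lambda^+\,d\mu>0$, so every ergodic SRB measure is a \emph{positive--entropy} equilibrium state of $\psi^u$.

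Next, suppose for contradiction that $\mu_1,\mu_2$ are two distinct ergodic SRB measures. Being hyperbolic of positive entropy, both lift to shift--invariant measures on the symbolic extension $\pi_r\colon\Sigma_r\to M$ of \cite{BCL}. By Remark \ref{rmk-admissible} and its footnote referring to \cite[Section 8]{LLS}, the lift of $\psi^u$ to $\Sigma_r$ is cohomologous to a H\"older continuous function in the Bowen--Walters metric, so both lifted measures are equilibrium states for a H\"older potential on the symbolic side. I would then invoke exactly the same ingredient used in the proof of Theorem \ref{maintheorem}: the transitivity of $\vf$ combined with the existence of a Birkhoff section forces all ergodic hyperbolic measures of positive entropy to lie in a single homoclinic class, represented by one transitive (irreducible) sub--shift of $\Sigma_r$. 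On this irreducible component, Sarig's uniqueness of equilibrium states for H\"older potentials on countable Markov shifts (adapted to three dimensional flows in \cite{BCS,BCL,LLS}) applies to the two lifts, giving $\mu_1=\mu_2$.

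The main obstacle is that $\psi^u$ is \emph{not} a positive entropy potential in the sense of the paper: invariant measures carried by the zero--exponent set $\{\lambda^+=0\}$ are also equilibrium states of $\psi^u$ with zero entropy, so Theorem \ref{maintheorem} does not apply verbatim. The argument therefore has to be restricted to the class of hyperbolic positive--entropy equilibrium states, to which both SRB measures belong; this is the class captured by the symbolic extension of \cite{BCL}, on which the Bowen--Walters H\"older regularity of Remark \ref{rmk-admissible} and the symbolic uniqueness theorem both remain valid. Verifying that the symbolic lift of $\psi^u$ is indeed cohomologous to a H\"older function in the Bowen--Walters metric -- as sketched in \cite[Section 8]{LLS} -- and that the Birkhoff section combined with transitivity really does merge all hyperbolic positive--entropy measures into a single irreducible symbolic component, are the two crucial technical inputs that make this adaptation of Theorem \ref{maintheorem} go through.
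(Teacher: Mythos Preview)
Your overall strategy—identify SRB measures as positive-entropy equilibrium states of the geometric potential $\psi^u$, use the homoclinic relation to put them in a single class, then invoke symbolic uniqueness via Remark~\ref{rmk-admissible}—is exactly the paper's approach. The identification of the obstacle (that $\psi^u$ is not a positive entropy potential, so one must restrict attention to the hyperbolic positive-entropy equilibrium states) is also correct and well stated.

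There is, however, a genuine gap. You write that you would ``invoke exactly the same ingredient used in the proof of Theorem~\ref{maintheorem}'' to conclude that all positive-entropy hyperbolic measures are homoclinically related. But Theorem~\ref{maintheorem} is a $C^\infty$ statement, and the step establishing the homoclinic relation (Proposition~\ref{prop.hom-rel}) uses $C^\infty$ regularity in an essential way: the Sard-type result \cite[Theorem~4.2]{BCS} is applied to a lamination whose transversal, coming from Ledrappier--Young, has \emph{some} positive Hausdorff dimension. In finite regularity $C^r$, that Sard argument only controls a set of Hausdorff dimension at most $1/r$, and there is no reason the transversal dimension produced by Ledrappier--Young exceeds $1/r$ for a general positive-entropy measure (this is precisely why Theorem~\ref{finitereg} needs the extra entropy hypothesis). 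So as written your argument does not go through for $C^r$ flows.

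The paper's fix exploits the SRB hypothesis directly, not just through positive entropy. It reruns the construction of Proposition~\ref{prop.hom-rel} with the roles of stable and unstable reversed: the lamination $\widehat{\mathfs L}$ is built from \emph{stable} leaves, and the transversal $\widehat\tau$ lives in the \emph{unstable} direction. Since $\mu$ is SRB, its conditionals on unstable manifolds are absolutely continuous, so $\widehat\tau\cap\widehat{\mathfs L}$ has Hausdorff dimension one, which automatically exceeds $1/r$ for any $r>1$. The Sard bound then yields the transverse intersection with $\partial^u Q_2$, and the rest of the proof proceeds as you describe. This use of absolute continuity is the missing idea in your proposal.
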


This is a version for three dimensional flows of the theorem for surface diffeomorphisms of Hertz et al \cite{HHTU}.
Partial results for flows in any dimension have been recently obtained in \cite{JEP25}.
By the Pesin entropy formula, any absolutely continuous hyperbolic invariant measure is SRB,
hence Theorem \ref{theoremSRB} provides the following ergodicity (hence Bernoulliness)
criterion for the Liouville measure.

\begin{ltheorem}\label{theorem-NUH}
Let $\vf$ be the geodesic flow on a closed orientable $C^\infty$ Riemannian surface,
and assume $\vf$ is transitive. If the Liouville measure is hyperbolic,
then it is Bernoulli.
\end{ltheorem}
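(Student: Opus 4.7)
My plan is to combine Pesin's entropy formula with Theorem \ref{theoremSRB} to deduce that the Liouville measure ${\rm vol}$ is ergodic, and then to apply Theorem \ref{maintheorem} (in the generalized form of Remark \ref{rmk-admissible}) with the geometric potential to upgrade ergodicity to the Bernoulli property. Since ${\rm vol}$ is absolutely continuous, $\vf$-invariant and hyperbolic, and $\vf$ is $C^\infty$ with positive speed, the Pesin entropy formula yields $h_{\rm vol}(\vf) = \int \lambda^+\, d{\rm vol} > 0$. Hence $h_{\rm top}(\vf) > 0$ and, by Alves-Mazzucchelli \cite{AM25}, $\vf$ admits a Birkhoff section; combined with transitivity, the hypotheses of Theorem \ref{theoremSRB} are met, and the Pesin formula makes ${\rm vol}$ itself an SRB measure.

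To upgrade ``at most one SRB measure'' to ``${\rm vol}$ is ergodic'', decompose ${\rm vol} = \int \mu_\xi\, d\tau(\xi)$ into ergodic components. Since $\lambda^\pm$ are measurable and $\vf$-invariant, $\tau$-a.e.\ $\mu_\xi$ is hyperbolic of saddle type with the same exponents as ${\rm vol}$. Jacobs' theorem gives $h_{\rm vol}(\vf) = \int h_{\mu_\xi}(\vf)\, d\tau(\xi)$, which, combined with Ruelle's inequality $h_{\mu_\xi}(\vf) \le \int \lambda^+\, d\mu_\xi$ for each component and Pesin's equality for ${\rm vol}$, forces $h_{\mu_\xi}(\vf) = \int \lambda^+\, d\mu_\xi$ for $\tau$-a.e.\ $\xi$. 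Hence each $\mu_\xi$ is an SRB measure, and Theorem \ref{theoremSRB} makes $\mu_\xi$ independent of $\xi$; therefore ${\rm vol}$ is ergodic.

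For the Bernoulli step, note that the geodesic flow is the Reeb flow of the canonical contact form on $T^1 S$. By Ruelle's inequality $P_{\rm top}(\psi_{\rm geo}) \le 0$, and ${\rm vol}$ attains equality, so $P_{\rm top}(\psi_{\rm geo})=0$ and ${\rm vol}$ is an equilibrium state of $\psi_{\rm geo}$. By Remark \ref{rmk-admissible} and \cite[Section 8]{LLS}, the lift of $\psi_{\rm geo}$ to the symbolic extension of \cite{BCL} is cohomologous to a Hölder function in the Bowen-Walters metric, so the generalized form of Theorem \ref{maintheorem} applies and the unique equilibrium state of $\psi_{\rm geo}$ is Bernoulli; this measure must coincide with ${\rm vol}$.

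The main obstacle is verifying that $\psi_{\rm geo}$ fits the ``positive entropy potential'' hypothesis of Theorem \ref{maintheorem}, since \emph{a priori} there could be ergodic equilibrium states of $\psi_{\rm geo}$ with zero positive Lyapunov exponent and hence zero entropy. One resolution is to restrict the uniqueness claim to the class of hyperbolic measures—on which the equilibrium state property for $\psi_{\rm geo}$ is equivalent to being SRB, and where uniqueness is provided directly by Theorem \ref{theoremSRB}—and then invoke the Bernoulli criterion of \cite{LLS} for hyperbolic measures of Reeb flows to conclude that ${\rm vol}$ is Bernoulli.
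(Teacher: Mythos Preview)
Your proposal is correct and ultimately follows the same strategy as the paper: show that the ergodic components of the Liouville measure are SRB, invoke Theorem~\ref{theoremSRB} to conclude ergodicity, and then apply the main result of \cite{LLS} (geodesic flows being Reeb flows) to obtain the Bernoulli property.

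Two remarks on presentation. First, your Jacobs--Ruelle--Pesin argument for showing each ergodic component is SRB is valid, but the paper takes the shorter route already flagged just before the statement of Theorem~\ref{theorem-NUH}: the ergodic components of an absolutely continuous invariant measure are themselves absolutely continuous, and any absolutely continuous hyperbolic measure satisfies Pesin's entropy formula, hence is SRB. Second, your detour through Theorem~\ref{maintheorem} with the geometric potential is unnecessary --- as you yourself observe, the positive-entropy-potential hypothesis is not verifiable for $\psi_{\rm geo}$, and the resolution you propose (restrict to hyperbolic measures and invoke \cite{LLS} directly) is exactly what the paper does from the outset. You can therefore delete the third paragraph entirely and go straight from ergodicity to the Bernoulli conclusion via \cite{LLS}.
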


\medskip
\noindent
{\em{Acknowledgements.}} We are thankful to Marcelo Alves, for explaining
the construction of Birkhoff sections.  We are also thankful to Ygor de Jesus and Rafael Ruggiero
for useful conversations. YL was supported by 
CNPq/MCTI/FNDCT project 406750/2021-1,
FUNCAP grant UNI-0210-00288.01.00/23, Instituto Serrapilheira grant
``Jangada Din\^{a}mica: Impulsionando Sistemas Din\^{a}micos na Regi\~{a}o Nordeste'', and
FAPESP grant number 2025/11400-7.
DO was partially supported by the National Science Foundation under Grant No. \ DMS-2349380. 
YL and DO were also supported by the Jacob Palis ABC-Fulbright Award. MP was partially supported by CAPES-Finance Code 001, Instituto Serrapilheira grant number Serra-R-2211-41879 and FUNCAP grant AJC 06/2022.
\section{Preliminaries}

Thoughout this paper, $M$ is a closed $C^\infty$ Riemannian three dimensional manifold,
$X$ is a $C^\infty$ vector field on $M$ such that $X(p)\neq 0$ for all $p\in M$, and 
$\vf=\{\vf^t\}_{t\in\R}:M\to M$ is the flow generated by $X$.
Given $x\in M$ and an interval $I\subset\R$, we write $\vf^I(x)=\{\vf^t(x):t\in I\}$.

\subsection{Birkhoff sections}

The main geometrical tool we will use is the following.

\medskip
\noindent
{\sc Birkhoff section:} A {\em Birkhoff section} of $\vf$ is a (possibly disconnected) immersed 
compact surface $\Sigma \looparrowright M$ such that:
\begin{enumerate}[(B1)]
\item $\partial \Sigma$ consists of finitely many periodic orbits of $\varphi$,
\item ${\rm int}(\Sigma)$ is embedded in $M\setminus \partial \Sigma$
and everywhere transverse to $X$, and
\item $\exists\, T>0$ such that for every $x\in M$ the orbit segment
$\varphi^{[0,T]}(x)$ intersects $\Sigma$.
\end{enumerate}

\medskip
\noindent
{\sc Poincaré return map:} Given a Birkhoff section $\Sigma$, we define
its {\em Poincaré return time map} $\tau:{\rm int}(\Sigma)\to (0,T]$ by
$$
\tau(x)=\min\{t>0:\vf^t(x)\in \Sigma\}
$$
and its {\em Poincaré return map} $f:{\rm int}(\Sigma)\to{\rm int}(\Sigma)$
by $f(x)=\vf^{\tau(x)}(x)$. 

\medskip
By definition, all flow trajetories but the finitely many 
periodic ones composing $\partial\Sigma$ intersect ${\rm int}(\Sigma)$ and hence
can be analyzed via $f$. The fact that $\tau>0$ follows from (B2).

\begin{remark}
Although it will not be used here, we point that Alves and Mazzucchelli proved
that if $\vf$ is the geodesic flow on a surface, then
there are Birkhoff sections with $\inf(\tau)>0$, see \cite[Remark 6.2]{AM25}. 
This allows to obtain a bijection between $\vf$--invariant ergodic probability measures
not supported on $\partial\Sigma$ and $f$--invariant ergodic probability measures
on ${\rm int}(\Sigma)$, see e.g. \cite[Lemma 2.3]{GKL25}.
\end{remark}

\begin{lemma}
The map $\tau$ is $C^\infty$. In particular, $f$ is a $C^\infty$ diffeomorphism.
\end{lemma}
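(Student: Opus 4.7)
The plan is a standard implicit function theorem argument, promoted from local smoothness to first-return smoothness by the transversality hypothesis (B2).

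Fix $x_0\in\interior(\Sigma)$, set $t_0=\tau(x_0)>0$, and let $y_0=\vf^{t_0}(x_0)\in\interior(\Sigma)$. By (B2), $\interior(\Sigma)$ is an embedded $C^\infty$ surface near $y_0$ and transverse to $X$ there, so we can find a neighborhood $U$ of $y_0$ in $M$ and a $C^\infty$ submersion $F:U\to\R$ with $F^{-1}(0)=\Sigma\cap U$ and $dF(y_0)(X(y_0))\neq 0$. The map
$$
G(x,t)=F(\vf^t(x))
$$
is $C^\infty$ on a neighborhood of $(x_0,t_0)$, vanishes at $(x_0,t_0)$, and satisfies $\partial_t G(x_0,t_0)=dF(y_0)(X(y_0))\neq 0$. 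The implicit function theorem produces a $C^\infty$ function $\widetilde\tau$ on a neighborhood $V$ of $x_0$, with $\widetilde\tau(x_0)=t_0$ and $\vf^{\widetilde\tau(x)}(x)\in\Sigma$ for every $x\in V$.

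The main (and only mildly delicate) step is to identify $\widetilde\tau$ with the first-return time $\tau$ on a (possibly smaller) neighborhood of $x_0$. Since $\interior(\Sigma)$ is embedded and transverse to $X$ at $x_0$, a flowbox argument produces $\delta>0$ and a neighborhood $V_1\subset V$ of $x_0$ with $\vf^{(0,2\delta]}(x)\cap\Sigma=\emptyset$ for all $x\in V_1$. By definition of $\tau(x_0)=t_0$, the compact orbit arc $K=\vf^{[\delta,t_0-\delta]}(x_0)$ is disjoint from the closed set $\Sigma$, so by continuity of the flow and compactness the same holds for the arcs $\vf^{[\delta,t_0-\delta]}(x)$ with $x$ in a smaller neighborhood $V_2\subset V_1$. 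Finally, by shrinking further we can assume $\widetilde\tau(V_2)\subset(t_0-\delta,t_0+\delta)$. For any $x\in V_2$ we then have $\vf^{(0,\widetilde\tau(x))}(x)\cap\Sigma=\emptyset$, which forces $\tau(x)=\widetilde\tau(x)$. Hence $\tau$ agrees locally with $\widetilde\tau$ and is $C^\infty$ at $x_0$.

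Since $\tau$ is $C^\infty$ and $(x,t)\mapsto\vf^t(x)$ is $C^\infty$, the Poincaré return map $f(x)=\vf^{\tau(x)}(x)$ is $C^\infty$ on $\interior(\Sigma)$. Applying the same argument to the reverse flow $\vf^{-t}$ (which still satisfies (B2), with the same section $\Sigma$) produces a $C^\infty$ first-return map for $-X$, which is $f^{-1}$. Thus $f$ is a $C^\infty$ diffeomorphism. The only real obstacle is the first-return identification in the middle paragraph; the endpoint-of-arc flowbox argument is the standard way around it.
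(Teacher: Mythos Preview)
Your argument is correct and follows essentially the same route as the paper's: a flowbox at the initial point, a compactness argument on the interior of the orbit arc, and transversality at the target (the paper packages the implicit function theorem step as ``the $C^\infty$ flow time between transverse sections''). One small omission: your conclusion $\vf^{(0,\widetilde\tau(x))}(x)\cap\Sigma=\emptyset$ still leaves the interval $(t_0-\delta,\widetilde\tau(x))$ uncovered; this is handled by the uniqueness part of the implicit function theorem (equivalently, a flowbox at $y_0$ showing $\Sigma\cap U$ is hit exactly once), which the paper builds in by taking a second flowbox near the landing point.
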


\begin{proof}
Fix $x\in {\rm int}(\Sigma)$ and let $t=\tau(x)$. By (B2), we can take $U\ni x$ and $V\ni \vf^{t}(x)$ small neighborhoods of 
${\rm int}(\Sigma)$ and $\ve>0$ small such that 
$\widetilde U=\vf^\ve U$ and $\widetilde V=\vf^{-2\ve}V$ are sections transverse to $X$ and
$\vf^{(0,\ve]}U\cap \Sigma=\vf^{[-2\ve,0)}V\cap \Sigma=\emptyset$. Since $\vf^{[\ve,t-\ve]}(x)$ is a compact
set disjoint from $\Sigma$, by the tubular neighborhood theorem there is $W\subset \widetilde U$
such that $\vf^{[0,t-2\ve]}W\cap \Sigma=\emptyset$ and $\vf^{t-2\ve}W\subset \vf^{[-2\ve,0)}V$.
Then on a neighborhood of $x$ we have
$\tau=\ve+\widetilde\tau$, where $\widetilde\tau$ is the $C^\infty$ flow time from $W$
to $V$.
\end{proof}

We will consider periodic points of $f$. If $\mathcal O$ is a hyperbolic periodic orbit
of $\varphi$ that is different from those composing
$\partial\Sigma$, then every $x\in \mathcal O\cap \Sigma$ is a hyperbolic periodic point for $f$.

\begin{lemma}\label{lemma-f-transitive}
If $\varphi$ is transitive then $f$ is transitive.
\end{lemma}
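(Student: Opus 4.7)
The plan is to verify topological transitivity of $f$ by the standard open-set criterion: for any two non-empty open sets $U,V\subset \mathrm{int}(\Sigma)$, exhibit an integer $n\geq 1$ with $f^n(U)\cap V\neq\emptyset$. Since $\mathrm{int}(\Sigma)$ is a separable metric Baire space with no isolated points, this will yield a dense $f$-orbit.

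First, I would fatten $U$ and $V$ into open sets of $M$. Using property (B2) and a flow-box argument (as in the preceding lemma on the smoothness of $\tau$), pick $\varepsilon>0$ small enough that $\widetilde U:=\vf^{(-\varepsilon,\varepsilon)}(U)$ and $\widetilde V:=\vf^{(-\varepsilon,\varepsilon)}(V)$ are open in $M$, and that $\vf^{(0,\varepsilon]}(\mathrm{int}(\Sigma))\cap \Sigma=\emptyset$ in a neighborhood, so that small flow intervals do not produce additional intersections with $\Sigma$.

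Next I would invoke transitivity of $\vf$ in the equivalent form: for any two non-empty open sets in $M$, one can find arbitrarily large $t>0$ with $\vf^t(\widetilde U)\cap \widetilde V\neq\emptyset$. Pick such $t>2\varepsilon$. Then there exist $u\in U$, $v\in V$, and $s_1,s_2\in(-\varepsilon,\varepsilon)$ with $\vf^{t+s_1}(u)=\vf^{s_2}(v)$, hence $\vf^{s}(u)=v$ for $s=t+s_1-s_2>0$. Because $u\in \mathrm{int}(\Sigma)$, the whole $\vf$-orbit of $u$ is disjoint from $\partial\Sigma$ (the latter being a union of periodic orbits, hence $\vf$-invariant), so every intersection of this orbit with $\Sigma$ lies in $\mathrm{int}(\Sigma)$. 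On the compact interval $[0,s]$ the orbit of $u$ meets $\Sigma$ only finitely many times (since the return time $\tau$ is positive on $\mathrm{int}(\Sigma)$ and the relevant orbit arc is compact); say there are $n\geq 1$ such returns, the last being at $v$. Therefore $v=f^n(u)$, giving $f^n(U)\cap V\neq\emptyset$.

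I do not expect a serious obstacle here; the only delicate point is ensuring that the saturation and the time $t$ are chosen so that $s>0$ and that every intermediate hit of $\Sigma$ lies in $\mathrm{int}(\Sigma)$, which is handled by the flow-box structure together with the invariance of $\partial\Sigma$. Once this is in place, passing from topological transitivity of $f$ on the second-countable space $\mathrm{int}(\Sigma)$ to the existence of a dense $f$-orbit is the usual Baire category argument.
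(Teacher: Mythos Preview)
Your argument is correct and follows essentially the same route as the paper: fatten open subsets of $\mathrm{int}(\Sigma)$ to open subsets of $M$ via the flow, use transitivity of $\vf$, and then convert the resulting flow connection into an $f$--iterate by showing that any orbit segment meets $\mathrm{int}(\Sigma)$ only finitely many times. The paper phrases transitivity via a single dense $\vf$--orbit rather than the open--set criterion, but this is cosmetic. One point worth making more explicit is the finiteness claim: positivity of $\tau$ alone does not give a uniform lower bound, so you should argue (as the paper does) that an accumulation of hit times would force an accumulation point in $\Sigma\setminus\partial\Sigma=\mathrm{int}(\Sigma)$, contradicting the local flow--box structure coming from (B2); your parenthetical hints at this but does not quite say it.
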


\begin{proof}
If $\vf$ is a geodesic flow on a surface, then by \cite{AM25} we can assume that $\ve_0:=\inf(\tau)>0$
and proceeds as follows. Take $x\in M$
with dense $\vf$--orbit on $M$. Since $x\not\in\partial\Sigma$, we can assume
that $x\in{\rm int}(\Sigma)$. Let $\mathcal T=\{0=t_0<t_1<t_2<\cdots\}$ be the discrete infinite set of times where
$\vf^{t_i}(x)\in{\rm int}(\Sigma)$, so that $f^n(x)=\vf^{t_n}(x)$.
Given $U$ open in ${\rm int}(\Sigma)$, condition (B2) implies that $V=\vf^{(0,\ve_0)}U$ is
open in $M$. By assumption, there is $t>0$ such that $\vf^t(x)\in V$ and so 
$\vf^{[t-\ve_0,t+\ve_0]}(x)\cap U\neq\emptyset$. Then there is $t_n\in[t-\ve_0,t+\ve_0]$
such that $\vf^{t_n}(x)\in U$. 

Now we give a proof that does not assume $\ve_0>0$. We prove that for every $x\in M$ and
$t>0$ the set $\mathcal T=\{s\in[0,t]:\vf^s(x)\in {\rm int}(\Sigma)\}$ is finite. 
For every $s\in\mathcal T$, condition (B2) allows to construct a flow box
$V_s=\vf^{(s-\delta(s),s+\delta(s))}U$ with $U\subset {\rm int}(\Sigma)$ such that
$\vf^{[s-\delta(s),s+\delta(s)]}(y)\cap U=\{y\}$
for every $y\in U$. These flow boxes form an open cover of the compact set
$\vf^{\mathcal T}(x)=\vf^{[0,t]}(x)\cap \Sigma$, hence there is a finite cover 
$V_{s_1},\ldots,V_{s_n}$. Therefore $|s-s'|>\min\{\delta(s_1),\ldots,\delta(s_n)\}$
for every $s,s'\in\mathcal T$, thus proving the claim.
The conclusion of the lemma is now direct: letting $x\in {\rm int}(\Sigma)$ with dense $\vf$--orbit and
proceeding as in the previous paragraph,
if $U\subset{\rm int}(\Sigma)$ is open then there is $t>0$ such that $\vf^t(x)\in U$, and 
necessarily $\vf^t(x)=f^n(x)$ for some $n>0$. 
\end{proof}

\subsection{Homoclinic relation of measures}

A $\vf$--invariant probability measure $\mu$ on $M$ is {\em hyperbolic}
if it has two non-zero Lyapunov exponents (the third exponent, in the flow direction, is zero).
When this happens, $\mu$--a.e.
$x\in M$ has a {\em strong stable manifold} 
$$
W^{ss}(x)=\left\{y\in M:\limsup_{t\to+\infty}\tfrac{1}{t}\log d(\vf^t(x),\vf^t(y))<0\right\}
$$
and a {\em stable manifold}
$$
W^{s}(x)=\bigcup_{t\in\R}\vf^t[W^{ss}(x)].
$$
We define similarly $W^{uu}(x)$ and $W^{u}(x)$
the {\em strong unstable} and {\em unstable} manifolds of $x$. 

Given a hyperbolic periodic orbit $\mathcal O$, we let
$W^{s/u}(\mathcal O)=W^{s/u}(x)$ denote
the stable/unstable manifold of $\mathcal O$, for any $x\in\mathcal O$. 
Recall that two hyperbolic periodic orbits $\mathcal O,\mathcal O'$ are
{\em homoclinically related} if $W^s(\mathcal O)\pitchfork W^u(\mathcal O')\neq\emptyset$
and $W^s(\mathcal O')\pitchfork W^u(\mathcal O)\neq\emptyset$.
When this happens, we write $\mathcal O\stackrel{h}{\sim}\mathcal O'$.

\medskip
\noindent
{\sc Homoclinic class of hyperbolic periodic orbit:} The {\em homoclinic class}
of a hyperbolic periodic orbit $\mathcal O$ is the set
$$
{\rm HC}(\mathcal O)=\overline{\{\mathcal O': \mathcal O\text{ and }\mathcal O'\text{ are homoclinically related}\}}=\overline{W^u(\mathcal O)\pitchfork W^s(\mathcal O)}.
$$  

\medskip
The second equality above is a well-known fact that follows from the classical inclination lemma and the Birkhoff-Smale Theorem. 

\medskip
\noindent
{\sc Homoclinic relation of measures \cite{BCS,BCL}:} 
We say that two ergodic hyperbolic measures $\mu,\nu$ are
\emph{homoclinically related} if for $\mu$--a.e. $x$ and $\nu$--a.e. $y$
there exist transverse intersections $W^{s}(x)\pitchfork W^{u}(y)\ne\emptyset$
and  $W^{u}(x)\pitchfork W^{s}(y)\ne\emptyset$. When this happens,
we write $\mu\stackrel{h}{\sim}\nu$.

\medskip
The homoclinic relation is an equivalence relation among ergodic hyperbolic measures,
see \cite[Prop. 10.1]{BCL}. The following theorem, whose proof uses symbolic dynamics
for the homoclinic class of a measure, is the second main tool of our proof.
It appears in \cite[Corollary 1.2]{BCL} for measures of maximal entropy in dimension three,
and in \cite[Corollary 1.2]{LMN} for more general potentials.
Below, we state it in dimension three for Hölder continuous potentials. 
 
\begin{theorem}[\cite{BCL,LMN}]\label{thm-BCL}
 Let $\vf:M\to M$ be a $C^\infty$ flow with positive
speed on a three dimensional closed $C^\infty$ Riemannian manifold $M$,
and let $\psi:M\to \mathbb{R}$ be Hölder continuous. If $\mu$ is a hyperbolic ergodic measure,
then $\psi$ has at most one equilibrium state that is homoclinically related to $\mu$.
In particular, if $\vf$ has positive topological entropy then $\psi$ has at most one measure of maximal entropy homoclinically related to $\mu$.
\end{theorem}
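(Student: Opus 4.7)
The plan is to build a symbolic extension via a countable topological Markov shift, following the Sarig-type approach developed for three-dimensional non-uniformly hyperbolic flows in \cite{BCS,BCL}, and then invoke Sarig's uniqueness theorem for equilibrium states on transitive countable Markov shifts. First, I would apply Pesin theory on the hyperbolic locus of $\mu$ to produce a countable family of Pesin-adapted ``$\ve$--double charts'', each equipped with a local transversal to the flow together with uniform hyperbolicity estimates. Linking two charts by an edge whenever a legal pseudo-orbit is shadowed by a genuine $\vf$--orbit yields a countable directed graph and an associated topological Markov shift $\Sigma$, together with a Hölder roof function $r$ (essentially the Poincaré return time between transversals). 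One then forms the suspension $\Sigma_r$ and a finite-to-one semiconjugacy $\pi_r:\Sigma_r\to M$ under which every ergodic hyperbolic measure with sufficiently regular Pesin parameters lifts to a $\sigma_r$--invariant probability measure.

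Next I would set up the dictionary between the homoclinic class of $\mu$ and the irreducible components of $\Sigma$: two periodic orbits whose codings lie in the same irreducible component are homoclinically related, via the inclination lemma applied to the local Markov product structure, and conversely two homoclinically related ergodic hyperbolic measures can be approximated by periodic orbits whose codings eventually coexist in a single transitive component. Consequently, any ergodic hyperbolic measure $\nu$ with $\nu\stackrel{h}{\sim}\mu$ lifts to $\Sigma_r$ supported on one fixed irreducible component $\Sigma^*$.

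Applying this to equilibrium states: any equilibrium state $\nu$ of $\psi$ that is homoclinically related to $\mu$ is hyperbolic by definition, and lifts to an equilibrium state on the suspension over $\Sigma^*$ for the Hölder potential $\hat\psi=\psi\circ\pi_r$. By Abramov's formula, this descends to an equilibrium state on the base shift $\Sigma^*$ for an associated Hölder induced potential. Sarig's thermodynamic formalism on transitive countable Markov shifts then guarantees uniqueness of the Hölder equilibrium state in the positive recurrent regime; positive recurrence is forced here by the existence of a finite equilibrium state with positive entropy (the latter transferred from the base shift since the coding is essentially finite-to-one). Pushing uniqueness forward through $\pi_r$ then yields uniqueness of the equilibrium state downstairs.

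The main obstacle lies in the construction of the symbolic extension itself and in matching the homoclinic relation on measures with the irreducibility relation on the Markov graph; this is where the bulk of the technical work resides, and one must in particular control the overcount of the coding and rule out that a single ergodic hyperbolic measure disperses across several irreducible components carrying incompatible equilibrium data. A secondary subtlety is that $\hat\psi$ need not be Hölder in the Bowen-Walters metric on $\Sigma_r$ directly from Hölder continuity of $\psi$ on $M$; bridging this gap typically requires replacing $\hat\psi$ by a cohomologous potential, a technicality already foreshadowed in Remark \ref{rmk-admissible}.
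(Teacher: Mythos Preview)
Your outline is essentially correct and matches the strategy of \cite{BCL,LMN}, but note that the present paper does \emph{not} prove this theorem: it is imported as a black box from those references (see the sentence preceding the statement). There is therefore no ``paper's own proof'' to compare against beyond the one-line summary that it ``uses symbolic dynamics for the homoclinic class of a measure'', which is exactly what you sketch.

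Two small corrections to your sketch. First, your worry that $\hat\psi=\psi\circ\pi_r$ may fail to be H\"older in the Bowen--Walters metric is misplaced for the theorem as stated: when $\psi$ is H\"older on $M$, the coding $\pi_r$ constructed in \cite{Lima-Sarig,BCL} is itself H\"older, so $\hat\psi$ is H\"older on $\Sigma_r$. The cohomology trick you allude to (and which Remark~\ref{rmk-admissible} foreshadows) is needed only for the geometric potential, which is not H\"older on $M$ to begin with. Second, in Sarig's formalism the relevant input for uniqueness is not ``positive entropy'' of the lifted equilibrium state but rather the existence of a finite shift-invariant equilibrium measure for the induced potential on the transitive component $\Sigma^*$; this forces positive recurrence and hence uniqueness via the generalized Ruelle--Perron--Frobenius theorem. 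An equilibrium state of a H\"older potential homoclinically related to $\mu$ need not, in general, have positive entropy.
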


We also consider the following definition. 

\medskip
\noindent
{\sc Topological homoclinic class:} The {\em topological homoclinic class}
of $\mu$ is the subset of $M$ defined as
$$
{\rm HC}(\mu)=\overline{\bigcup\{{\rm supp}(\nu):\nu\stackrel{h}{\sim}\mu\}}.
$$ 
This set equals ${\rm HC}(\mu)={\rm HC}(\mathcal O)$ for any hyperbolic period orbit
$\mathcal O\stackrel{h}{\sim}\mu$.

\medskip
The characterization ${\rm HC}(\mu)={\rm HC}(\mathcal O)$ above is a version for flows of \cite[Corollary 2.14]{BCS}.

\section{Proof of Theorem \ref{maintheorem}}

By the Ruelle inequality and Theorem \ref{thm-BCL}, it is enough to prove that every two
ergodic measures with positive entropy are homoclinically related. The idea is to apply a two dimensional topological argument
of \cite{BCS} to the first return map $f$ of a Birkhoff section. That requires
a small discussion on stable/unstable curves for $f$. For general $x\in {\rm int}(\Sigma)$,
relating the stable/unstable curves of $x$ for $\vf$ and for $f$ is difficult, since the
trajectory of $x$ might approach $\partial\Sigma$. But if $\mathcal O$ is a periodic orbit for $\vf$ 
not intersecting $\partial \Sigma$,
then it is at a positive distance to $\partial\Sigma$, so that 
the flow projection to ${\rm int}(\Sigma)$ of a small curve $\gamma\subset W^{s}(\mathcal O)$
defines a curve $\widetilde\gamma$ such that $f^n(\widetilde\gamma)$ converges to $\mathcal O$
as $n\to\infty$. For simplicity, we will call such projection
a {\em stable curve of $x$} for every $x\in\mathcal O\cap{\rm int}(\Sigma)$, and similarly for unstable curves.

\medskip
\noindent
{\sc $su$--quadrilateral:} An $su$--quadrilateral of $f$ is a compact connected set $Q\subset {\rm int}(\Sigma)$
such that:
\begin{enumerate}[(Q1)]
\item $Q=\overline{\interior(Q)}$, and
\item there is a periodic point $x$ of $f$ such that
$\partial Q$ is a curve composed by two disjoint unstable curves of $x$
and two disjoint stable curves of $x$.
\end{enumerate} 
We write $\partial^{u/s}Q$ for the the two unstable/stable curves of $\partial Q$.

\begin{proposition}\label{prop-periodic-point}
If $\mu$ is a $\vf$--invariant ergodic probability measure with positive entropy,
then one can find $\mathcal O$ a hyperbolic periodic orbit homoclinically related to $\mu$ that
is disjoint from $\partial\Sigma$ and a quadrilateral $Q$ defined by $\mathcal O$.
\end{proposition}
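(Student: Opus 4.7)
The plan is to combine hyperbolic Pesin/Katok theory with the horseshoe picture of a transverse homoclinic intersection. First, the flow direction always contributes a zero Lyapunov exponent, so by the Ruelle inequality the assumption $h_\mu(\vf) > 0$ forces a pair of nonzero exponents of opposite sign, and $\mu$ is hyperbolic of saddle type. By Katok's horseshoe theorem for flows (equivalently, by the BCL symbolic machinery quoted before Theorem~\ref{thm-BCL}), $\mu$ is the limit of a family of horseshoes, and there exist infinitely many mutually homoclinically related hyperbolic periodic orbits $\{\mathcal O_n\}$, each of them homoclinically related to $\mu$. Since $\partial\Sigma$ is the union of finitely many periodic orbits and the $\mathcal O_n$ are pairwise distinct, all but finitely many of them are disjoint from $\partial\Sigma$; pick one such orbit and call it $\mathcal O$. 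Then $\mathcal O$ lies at positive distance from $\partial\Sigma$, every $x \in \mathcal O \cap {\rm int}(\Sigma)$ is a hyperbolic periodic point of $f$, and the local stable/unstable manifolds of $\mathcal O$ project via the flow to honest $1$-dimensional stable/unstable curves of $x$ for $f$, on which the $\lambda$-lemma applies.

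For the quadrilateral, fix $x \in \mathcal O \cap {\rm int}(\Sigma)$ with $f^k(x)=x$. Since $\mathcal O \stackrel{h}{\sim} \mathcal O$, there is a transverse homoclinic intersection $p \in W^u(x) \pitchfork W^s(x)$ realized inside ${\rm int}(\Sigma)$. Iterating a short unstable arc through $p$ forward and applying the $\lambda$-lemma gives long unstable curves of $x$ that $C^1$-accumulate on $W^u_{\rm loc}(x)$; a symmetric statement holds for stable curves under backward iteration. From these one extracts two disjoint unstable curves $\alpha^u_1, \alpha^u_2$ and two disjoint stable curves $\alpha^s_1, \alpha^s_2$ of $x$, meeting transversely at four corners and bounding a topological disc $Q \subset {\rm int}(\Sigma)$ with $Q = \overline{{\rm int}(Q)}$. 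This is the classical Markov rectangle attached to the transverse homoclinic orbit of $p$.

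The step I expect to be most delicate is this last construction: producing a genuine quadrilateral rather than a bigon (whose boundary would be just one stable and one unstable arc joining $x$ to $p$), with the four arcs pairwise disjoint apart from their corners and remaining inside the immersed surface ${\rm int}(\Sigma)$. Concretely, one would choose two transverse homoclinic points lying on the same local branch of $W^u(x)$ (and symmetrically for $W^s(x)$), and argue via planar topology on $\Sigma$ to close up the boundary into a disc. This is essentially the content of the Birkhoff--Smale horseshoe theorem in the two-dimensional setting of the Poincaré return map $f$; the subtleties specific to our setting are checking that the flow-projections of the ambient stable/unstable leaves are regular enough and sufficiently transverse to $X$ to carry the classical planar construction through, and that none of the arcs runs into $\partial\Sigma$ — both of which follow from the fact that $\mathcal O$ was chosen at positive distance from $\partial\Sigma$.
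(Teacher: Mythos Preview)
Your argument is correct and the first half (producing $\mathcal O$ disjoint from $\partial\Sigma$ via Katok's horseshoe or the BCL symbolic coding, and using that $\partial\Sigma$ contains only finitely many periodic orbits) is exactly what the paper does.

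The construction of $Q$ differs. You center the quadrilateral at the periodic point $x\in\mathcal O\cap{\rm int}(\Sigma)$, take a nontrivial transverse homoclinic point $p$ of $\mathcal O$ (which exists since $\mathcal O$ sits inside a horseshoe homoclinically related to $\mu$), and use the classical $\lambda$-lemma/Birkhoff--Smale picture to produce two stable and two unstable arcs near $W^{s/u}_{\rm loc}(x)$. The paper instead centers $Q$ at a $\mu$-typical (non-periodic) point $x\in{\rm int}(\Sigma)$: since $\mathcal O\stackrel{h}{\sim}\mu$, both $W^s(\mathcal O)\pitchfork W^u(x)$ and $W^u(\mathcal O)\pitchfork W^s(x)$ are nonempty, so the inclination lemma gives two pieces of $W^u(\mathcal O)$ $C^1$-close to the local piece $\gamma^u$ of $W^u(x)\cap{\rm int}(\Sigma)$, and symmetrically two pieces of $W^s(\mathcal O)$ $C^1$-close to $\gamma^s$. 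Because $\gamma^u$ and $\gamma^s$ are already transverse at $x$ by Pesin theory, the four nearby curves automatically intersect in four corners and bound a small disc, sidestepping precisely the ``bigon vs.\ quadrilateral'' issue you flag as delicate. Your route is the more classical one; the paper's choice of center point makes the quadrilateral structure come for free and avoids any planar-topology bookkeeping near the periodic point.
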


\begin{proof}
The proof requires applying the inclination lemma stated in the proof of \cite[Prop. 10.1]{BCL}:\\

\noindent
{\bf Inclination lemma.}
\emph{For any hyperbolic measure $\mu$,
there is a set $Y\subset M$ of full $\mu$--measure satisfying the following:
if $x\in Y$, $D\subset W^u(x)$ is a two-dimensional disc 
and $\Delta$ is a two-dimensional disc tangent to $X$ having a transverse intersection point with $W^s(x)$,
then there are discs $\Delta_k\subset \varphi_{(k,+\infty)}(\Delta)$ which converge to $D$ in the $C^1$ topology.}\\

Let $\mu$ be a $\vf$--invariant ergodic probability measure with positive entropy.
We start by showing that there is a periodic orbit $\mathcal O$ disjoint from $\partial\Sigma$ that is homoclinically
related to $\mu$. One way to prove this is to apply a flow version of the Katok horseshoe theorem \cite{Katok-ICM}.
Another is to use the symbolic dynamics of Lima and Sarig \cite{Lima-Sarig}: there is a
Hölder continuous map $\pi_r:\Sigma_r\to M$ defined on a topological Markov flow $\Sigma_r$
such that $\mu[\pi_r(\Sigma_r)]=1$. By \cite{BCL}, $\Sigma_r$ can be chosen irreducible.
Given $x=\pi_r(\un R,0)$, the irreducibility allows to approximate $\un R$ by periodic points
$\un R^{(n)}$, and so $p_n=\pi_r(\un R^{(n)},0)$ converges to $x$. Choosing $x$ on the support of $\mu$,
for a positive $\mu$--measure set $A$ in a neighborhood of $x$ the invariant manifolds $W^{s/u}(y)$ are large
for all $y\in A$ and so they intersect transversally $W^{u/s}(p_n)$. Since $\partial\Sigma$
is composed of finitely many periodic orbits, we can take $\mathcal O$ disjoint from $\partial\Sigma$.

Fix one such periodic orbit $\mathcal O$. Fix $x\in Y$ such that
$$
W^s(\mathcal O)\pitchfork W^u(x)\neq\emptyset \text{ and } W^u(\mathcal O)\pitchfork W^s(x)\neq\emptyset.
$$
Flowing under $\vf$, we can assume that $x\in{\rm int}(\Sigma)$. Consider a small open ball $B$ around $x$
that does not intersect $\partial\Sigma$. Let $B^{u/s}$ be the connected component
of $W^{u/s}(x)\cap B$ that contains $x$, and $\gamma^{u/s}=B^{u/s}\cap{\rm int}(\Sigma)$.
See Figure \ref{figure-quadrilateral}.

By the inclination lemma,
$W^u(\mathcal O)$ accumulates
on $B^u$, hence there are disjoint sets
$B^u_1,B^u_2\subset W^u(\mathcal O)$ which are $C^1$--close to $B^u$.
Their flow projection to ${\rm int}(\Sigma)$ are two unstable curves
 $\gamma^u_1,\gamma^u_2$ associated to $\mathcal O$, which are 
 $C^1$--close to $\gamma^u$.
\begin{figure}[hbt!]
\centering
\def\svgwidth{12cm}
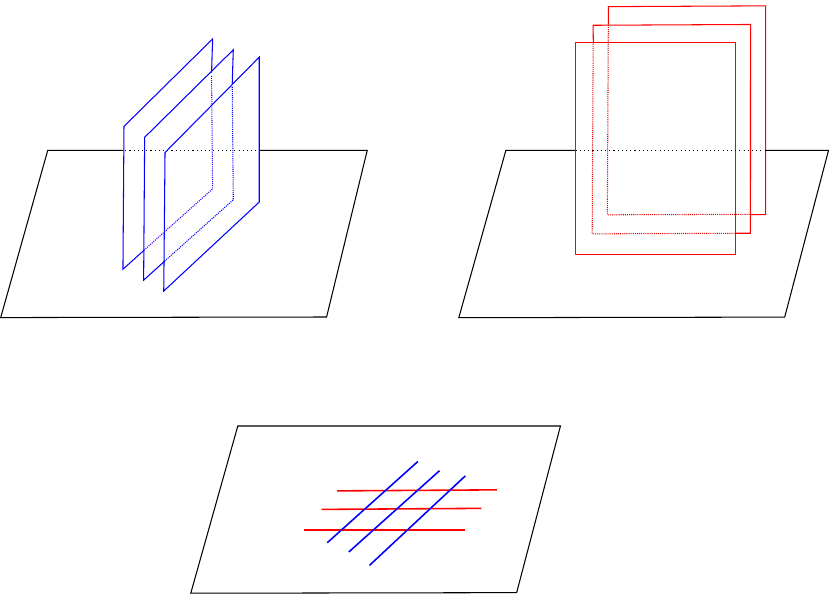\caption{Construction of $su$--quadrilateral.}\label{figure-quadrilateral}
\end{figure}
The same argument applied to the stable direction provides 
two stable curves
$\gamma^s_1,\gamma^s_2$ associated to $\mathcal O$, both 
$C^1$--close to $\gamma^s$. The four curves $\gamma^u_1,\gamma^u_2,\gamma^s_1,\gamma^s_2$
define a $su$--quadrilateral $Q\subset B\cap{\rm int}(\Sigma)$ associated to $\mathcal O$. 
\end{proof}

\begin{proposition}\label{prop-topological-intersection}
Let $\mathcal O_1,\mathcal O_2$ be periodic orbits of $f$ and $Q_1,Q_2$ be $su$--quadrilaterals 
defined by $\mathcal O_1,\mathcal O_2$ such that $Q_1\cap \mathcal O_2=Q_2\cap\mathcal O_1=\emptyset$. 
There exists $n>0$ such that $f^n(\partial^uQ_1)$ intersects $\interior(Q_2)$ and
$\interior(\Sigma)\setminus Q_2$.
An analogous statement holds for $\partial^s Q_1$.
\end{proposition}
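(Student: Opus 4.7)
The plan is to verify the two intersection claims — with $\interior(\Sigma)\setminus Q_2$ and with $\interior(Q_2)$ — separately, then observe they can be arranged to hold for a common $n$. The first is relatively direct; the second requires combining transitivity with local product structure near $\mathcal{O}_1$.

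For the exit from $Q_2$, I would use that the four corners of $Q_1$ lie in $\partial^u Q_1\cap W^s(\mathcal{O}_1;f)$: they are homoclinic points of $\mathcal{O}_1$, so their forward $f$-iterates converge to $\mathcal{O}_1$. Since $\mathcal{O}_1\cap Q_2=\emptyset$ and both sets are compact, these iterates lie in a fixed neighborhood of $\mathcal{O}_1$ disjoint from $Q_2$ for all sufficiently large $n$, placing them in $\interior(\Sigma)\setminus Q_2$, hence $f^n(\partial^u Q_1)\cap(\interior(\Sigma)\setminus Q_2)\neq\emptyset$.

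For the entrance into $\interior(Q_2)$, my plan is to first establish the stronger statement $W^u(\mathcal{O}_1;f)\cap\interior(Q_2)\neq\emptyset$, and then observe that $\bigcup_{n\geq 0}f^n(\partial^u Q_1)$ exhausts the forward branch of $W^u(\mathcal{O}_1;f)$ emanating from $\mathcal{O}_1$, so any target point in the intersection appears in $f^n(\partial^u Q_1)$ for some large $n$. To produce the intersection point, I would fix an open set $W$ with $\overline{W}\subset\interior(Q_2)$ and a point $x_1\in\mathcal{O}_1$, pick $z$ with dense forward $f$-orbit (by Lemma~\ref{lemma-f-transitive}), and choose sequences $n_i\to\infty$ and $m_i>0$ with $f^{n_i}(z)\to x_1$ and $f^{n_i+m_i}(z)\in W$. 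By the local product structure at $x_1$ (available because $\mathcal{O}_1$ is hyperbolic for $f$ and disjoint from $\partial\Sigma$), define $p_i\in W^u_{\mathrm{loc}}(x_1)$ as the intersection of the local stable leaf through $f^{n_i}(z)$ with $W^u_{\mathrm{loc}}(x_1)$. Then $f^{n_i}(z)\in W^s_{\mathrm{loc}}(p_i)$, and stable contraction plus a shadowing-type estimate yield $d(f^{m_i}(p_i),f^{n_i+m_i}(z))\to 0$ as $i\to\infty$, forcing $f^{m_i}(p_i)\in W\subset\interior(Q_2)$ for large $i$. Since $p_i\in W^u(\mathcal{O}_1;f)$, so is $f^{m_i}(p_i)$, yielding the desired point. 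The statement for $\partial^s Q_1$ follows by the same argument applied to $f^{-1}$.

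The main obstacle is the entrance step: transitivity alone does not imply density of the one-dimensional immersed curve $W^u(\mathcal{O}_1;f)$ in $\interior(\Sigma)$, so one cannot conclude directly that it meets $\interior(Q_2)$. The key trick is to use transitivity only to produce an orbit passing near $\mathcal{O}_1$ and later entering $W$, and then to correct the orbit via the local stable foliation at $\mathcal{O}_1$ so that the corrected point lies on the unstable manifold while its later iterate remains in $W$. Carefully selecting the two time instants $n_i$ and $n_i+m_i$ along the dense orbit, controlling the tracking error over $m_i$ iterations as $f^{n_i}(z)\to x_1$, and ensuring the shifted iterate stays strictly inside $W$ rather than merely on $\partial Q_2$, is the delicate aspect of the argument.
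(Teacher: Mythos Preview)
Your exit argument (via the corners of $Q_1$) is fine and coincides with the paper's. The entrance argument, however, has a genuine gap.

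Your plan for the entrance is to prove $W^u(\mathcal O_1;f)\cap\interior(Q_2)\neq\emptyset$ using only transitivity of $f$ and the local product structure at $x_1\in\mathcal O_1$. Notice that nothing in that argument uses any property of $Q_2$ beyond the fact that $\interior(Q_2)$ is open; if it worked, it would show that $W^u(\mathcal O_1;f)$ meets \emph{every} nonempty open subset of $\interior(\Sigma)$, i.e.\ that $W^u(\mathcal O_1;f)$ is dense. Transitivity alone does not give this. The specific step that fails is the claimed estimate $d(f^{m_i}(p_i),f^{n_i+m_i}(z))\to 0$: the local stable foliation near $x_1$ only controls the pair $\bigl(f^k(p_i),f^{n_i+k}(z)\bigr)$ while both points remain in the linearizing neighborhood of $\mathcal O_1$. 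Once $f^k(p_i)$ escapes along $W^u(\mathcal O_1)$ (which it must), there is no ambient hyperbolicity on $\interior(\Sigma)$ to keep the two orbits together for the remaining $m_i-k$ iterates, and nothing in your setup bounds $m_i$. You flag this as ``the delicate aspect'', but it is not merely delicate: without a bound on $m_i$ or global hyperbolicity, the estimate is simply unavailable.

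The paper's proof sidesteps this by a short topological argument that exploits the quadrilateral structure of \emph{both} $Q_1$ and $Q_2$. By transitivity one takes $n$ large with $f^n(\interior Q_1)\cap\interior(Q_2)\neq\emptyset$ and $f^n(\partial^sQ_1)\subset\interior(\Sigma)\setminus Q_2$ (so the endpoints of $f^n(\partial^uQ_1)$ are outside $Q_2$). If $f^n(\partial^uQ_1)$ also avoided $\interior(Q_2)$, then $\partial(f^nQ_1)\cap\interior(Q_2)=\emptyset$ and connectedness forces $f^n(Q_1)\supset Q_2$, hence $Q_1\supset f^{-n}(\partial^uQ_2)$. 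But $f^{-n}(\partial^uQ_2)\to\mathcal O_2$ while $Q_1\cap\mathcal O_2=\emptyset$, a contradiction. The hypothesis $Q_1\cap\mathcal O_2=\emptyset$ and the fact that $\partial^uQ_2\subset W^u(\mathcal O_2)$ are precisely what drive the entrance step; your approach discards this information and therefore cannot close the argument.
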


Above, the interior of $Q_2$ is considered in the relative topology of ${\rm int}(\Sigma)$.

\begin{proof}
We follow ipsis literis the argument of Step 2 in the proof of \cite[Theorem 6.4]{BCS}.
We have decided to include it to stress the purely topological nature of the argument, regardless
$f$ being defined in a non-compact domain.
By Lemma \ref{lemma-f-transitive}, there is $n>0$ as large as we want such that $f^n(Q_1)\cap Q_2\neq\emptyset$.
We know that $f^n(\partial^sQ_1)$ converges to $\mathcal O_1$ as $n\to\infty$. Since
$Q_2\cap\mathcal O_1=\emptyset$, 
we can assume that $f^n(\partial^sQ_1)\subset {\rm int}(\Sigma)\setminus Q_2$. Since
the endpoints of $f^n(\partial^uQ_1)$ are the same of $f^n(\partial^sQ_1)$, it follows
that $f^n(\partial^uQ_1)$ intersects $\interior(\Sigma)\setminus Q_2$.
Assume that $f^n(\partial^uQ_1)$ does not intersect ${\rm int}(Q_2)$. Then 
$f^n(Q_1)\supset Q_2$, that is $Q_1\supset f^{-n}(Q_2)$. But
$f^{-n}(\partial^u Q_2)$ converges to $\mathcal O_2$, which is disjoint from $Q_1$, a contradiction.
The proof is complete.
\end{proof}

\begin{proposition}\label{prop.hom-rel}
If $\mu$ and $\nu$ are $\vf$--invariant ergodic probability measures with positive entropy,
then $\mu\stackrel{h}{\sim}\nu$.
\end{proposition}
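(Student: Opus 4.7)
The plan is to combine Propositions \ref{prop-periodic-point} and \ref{prop-topological-intersection} in the style of \cite[Theorem 6.4]{BCS}: associate to each of $\mu$ and $\nu$ a hyperbolic periodic orbit with an $su$--quadrilateral, and then prove that these two orbits are themselves homoclinically related by iterating an unstable side of one quadrilateral across the other. Once $\mathcal O_1 \stackrel{h}{\sim} \mathcal O_2$, transitivity of the homoclinic relation on ergodic hyperbolic measures finishes the job.

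By Ruelle's inequality applied to both $\vf$ and $\vf^{-1}$, positive entropy forces both $\mu$ and $\nu$ to be hyperbolic of saddle type. Applying Proposition \ref{prop-periodic-point} I obtain hyperbolic periodic orbits $\mathcal O_1 \stackrel{h}{\sim} \mu$ and $\mathcal O_2 \stackrel{h}{\sim} \nu$, both disjoint from $\partial\Sigma$, together with $su$--quadrilaterals $Q_1, Q_2$ defined by $\mathcal O_1, \mathcal O_2$. The disjointness hypothesis $Q_1 \cap \mathcal O_2 = Q_2 \cap \mathcal O_1 = \emptyset$ of Proposition \ref{prop-topological-intersection} is arranged by shrinking the base ball $B$ in the construction of Proposition \ref{prop-periodic-point}: since positive entropy forces $\mu(\mathcal O_2) = \nu(\mathcal O_1) = 0$, the base point $x$ may be chosen off the opposite orbit, and then $B$ shrunk to avoid it.

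Proposition \ref{prop-topological-intersection} then yields $n > 0$ such that the connected curve $f^n(\partial^u Q_1)$ enters $\interior(Q_2)$ and leaves it, hence crosses $\partial Q_2 = \partial^u Q_2 \cup \partial^s Q_2$. Following Step 2 of the proof of \cite[Theorem 6.4]{BCS}, these crossings must land on $\partial^s Q_2$: the inclination lemma quoted in the proof of Proposition \ref{prop-periodic-point} implies that for large $n$ the arc $f^n(\partial^u Q_1) \subset W^u(\mathcal O_1)$ becomes $C^1$--close to arbitrarily long unstable arcs of $\mathcal O_2$, so a transverse crossing with $\partial^u Q_2 \subset W^u(\mathcal O_2)$ is impossible. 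This produces a transverse intersection of $W^u(\mathcal O_1)$ with $W^s(\mathcal O_2)$. The symmetric argument, using the $\partial^s$ version of Proposition \ref{prop-topological-intersection} and iterating $f^{-n}$, gives $W^s(\mathcal O_1) \pitchfork W^u(\mathcal O_2) \neq \emptyset$, so $\mathcal O_1 \stackrel{h}{\sim} \mathcal O_2$.

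Finally, since the homoclinic relation is an equivalence relation on ergodic hyperbolic measures \cite[Prop. 10.1]{BCL}, the chain $\mu \stackrel{h}{\sim} \mathcal O_1 \stackrel{h}{\sim} \mathcal O_2 \stackrel{h}{\sim} \nu$ yields $\mu \stackrel{h}{\sim} \nu$. The main obstacle is the topological step that forces the crossings onto $\partial^s Q_2$ rather than $\partial^u Q_2$: this is where the two--dimensional character of the section is essential, and where one must carefully justify the $C^1$ approximation provided by the inclination lemma in the section setting, rather than only in the ambient flow. Apart from this, every other step is a bookkeeping reduction between $\vf$ and $f$.
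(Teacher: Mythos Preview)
Your overall strategy---reduce to showing $\mathcal O_1 \stackrel{h}{\sim} \mathcal O_2$ via the quadrilaterals and then invoke transitivity of the homoclinic relation---is sound, and the setup through Proposition~\ref{prop-topological-intersection} is correct. The gap is in the step where you upgrade the topological crossing of $f^n(\partial^u Q_1)$ with $\partial Q_2$ to a \emph{transverse} intersection $W^u(\mathcal O_1)\pitchfork W^s(\mathcal O_2)$. Your justification is circular: you invoke the inclination lemma to say that $f^n(\partial^u Q_1)$ becomes $C^1$--close to unstable arcs of $\mathcal O_2$, but the inclination lemma yields this only once $W^u(\mathcal O_1)$ already has a transverse intersection with $W^s(\mathcal O_2)$, which is precisely what you are trying to prove. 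Without uniform hyperbolicity there are no cone fields forcing an unstable curve of $\mathcal O_1$ to meet a stable curve of $\mathcal O_2$ transversally; a tangency is not excluded a priori. This is not a matter of ``carefully justifying the $C^1$ approximation''---it is a genuine missing ingredient.

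The paper closes this gap by a substantially different mechanism (also the one used in \cite{BCS}): instead of working with the single curve $f^n(\partial^u Q_1)$, it builds a \emph{lamination} $\widehat{\mathfs L}$ of unstable pieces coming from $\mu$--typical points in a Pesin block. The Ledrappier--Young entropy formula, together with $h_\mu(\vf)>0$, guarantees that this lamination has positive transverse Hausdorff dimension, and the holonomies are Lipschitz. Then the Sard--type lemma \cite[Theorem~4.2]{BCS} says that the set of leaves tangent to $\partial^s Q_2$ has zero transverse Hausdorff dimension, so some leaf---hence some $W^u(z)$ for a $\mu$--typical $z$---meets $\partial^s Q_2$ transversally. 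This yields $\mu\stackrel{h}{\sim}\mathcal O_2$ directly. So the positive entropy hypothesis is used twice: once to produce $\mathcal O_1,\mathcal O_2$ and the quadrilaterals, and once more (via Ledrappier--Young) to manufacture the positive-dimensional family needed for the transversality argument. Your proposal uses it only once and therefore cannot complete the proof.
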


\begin{proof}
This is an adaptation of  \cite[Corollary 1.5]{BCS}. Let $\mu$ and $\nu$ be $\vf$--invariant 
ergodic probability measures with positive entropy.

By Proposition \ref{prop-periodic-point}, there are periodic orbits $\mathcal O_1,\mathcal O_2$ disjoint
from $\partial\Sigma$ such that $\mathcal O_1$ is homoclinically related to $\mu$,
$\mathcal O_2$ is homoclinically related to $\nu$, and $su$--quadrilaterals $Q_1,Q_2$ defined
by these orbits. We can choose these quadrilaterals satisfying the assumptions of Proposition 
\ref{prop-topological-intersection}. Therefore, there is an unstable curve $\gamma:[0,1]\to {\rm int}(\Sigma)$
of $\mathcal O_1$ such that $x=\gamma(0)\not\in Q_2$ and $y=\gamma(1)\in {\rm int}(Q_2)$. 
By the Jordan curve theorem, there is $t\in (0,1)$ such that $\gamma(t)\in \partial^sQ_2$.

Let $\Delta'\subset M$ be a Pesin block with $\mu(\Delta')>0$. Take $\Delta \subset \Delta' $ of full measure inside $\Delta'$ such that every point in $\Delta$ is a density point of $\Delta'$. Let $\delta>0$ be such that every
unstable Pesin manifold of points in $\Delta'$ contains a disk of radius $\delta$; we denote this set
by $W^u_\delta(z)$ for $z\in \Delta'$. The sets $W^u_\delta(z)$ vary continuously with $z\in\Delta'$.

As $\mathcal O_1 \stackrel{h}{\sim}\mu$, by the inclination lemma we can also assume that $W^s(\mathcal O_1)\pitchfork W^u_{\delta/10}(z) \neq \emptyset$ for any  $z\in \Delta'$.  Fix $z\in \Delta$ and
a small disk $D^s \subset W^s(\mathcal{O}_1)$ which contains a transverse intersection of $W^s(\mathcal{O}_1)$ and $W^u_{{\delta/10}}(z)$.  By the continuity of $z\in\Delta'\mapsto W^u_\delta(z)$, if 
$r>0$ is small then $D^s \pitchfork W^u_{\delta}(z') \neq \emptyset$ for every $z'\in B_r(z)\cap \Delta'$. 

Take the disk $D\subset W^u(\mathcal O_1)$,  defined by $D=\varphi^{[-a,a]}(\gamma)$, for some $a>0$,  and observe that $\gamma$ is contained in the transverse intersection of $D$ with $\interior (\Sigma)$.  By the inclination lemma again with $D$ and $W^u_\delta(z)$, there exists $T>0$ such that $\varphi^T(W^u_\delta(z))\cap \interior (\Sigma) \neq \emptyset$, and it contains a continuous curve $\widehat{\gamma}$ which is close to $\gamma$ and, in particular,  it intersects both $\interior(Q_2)$ and $\interior(\Sigma)\setminus Q_2$.
By continuity, for any $z' \in B_r(z) \cap \Delta'$ we have that
$\varphi^T(W^u_{\delta}(z'))\cap {\rm int}(\Sigma)$ contains a curve $\gamma'$ which is also close to $\gamma$, and intersects both  $\interior(Q_2)$ and $\interior(\Sigma)\setminus Q_2$. 

Recall that $z\in \Delta$ is a density point of $\Delta'$. Therefore, we may assume that $\mu(B_r(z))>0$.
We can also assume that $r$ is much smaller than the size of the stable manifolds for points in $\Delta'$.
For $z' \in \Delta' \cap B_r(z)$, let $W^{ss}_r(z')$ be the connected component of $W^{ss}(z')\cap B_r(z)$ that contains $z'$, and let $z'\mapsto \mu^{ss}_{z'}$ be the disintegration of $\mu$ restricted to $\Delta' \cap B_r(z)$
with respect to the partition given by $W^{ss}_r(z')$, $z' \in \Delta' \cap B_r(z)$. In particular,
$\mu^{ss}_{z'}(\Delta') >0$ for $\mu$--a.e. $z' \in \Delta' \cap B_r(z)$.

By the Ledrappier-Young entropy formula \cite{LY2},  we have that $\mu^{ss}_{z'}$ has positive dimension.
Hence, we can take a point $w \in \Delta' \cap B_r(z)$ and a set $\tau\subset W^{ss}_r(w)\cap \Delta'$ with positive Hausdorff dimension. We thus define a lamination $\mathfs L:= \bigcup_{z' \in \tau} W^u_\delta(z')$.
By the above discussion, every leaf of $\mathfs L$ intersects $D^s$ transversely.  By the inclination lemma, $\mathfs L_T:= \varphi^T\left( \mathfs L \right)$ intersects transversely $\interior(\Sigma)$ near $\gamma$.  By the transversality of the flow with the Birkhoff section, we obtain a lamination $\widehat{\mathfs L}$ in $\interior(\Sigma)$ with the following properties:
\begin{enumerate}[$\circ$]
\item $\widehat{\mathfs L}$ is contained in the intersection of $\mathfs L_T$ and $\interior(\Sigma)$.
\item Every leaf of $\widehat{\mathfs L}$ is a $C^\infty$ curve close to $\gamma$, hence
intersects $\interior(Q_2)$ and $\interior(\Sigma) \setminus Q_2$.
\item $\widehat{\mathfs L}$ has positive transverse Hausdorff dimension.
\item The holonomy of $\widehat{\mathfs L}$ is Lipschitz.
\end{enumerate}
The first item is just transversality. The second item is a consequence of the inclination lemma, as 
already explained. The third item holds because $\mathfs L$ has positive Hausdorff dimension.
It remains to justify the fourth item. Since $M$ has dimension three, inside a Pesin block,
the unstable lamination is absolutely continuous. This unstable lamination has codimension one,
a context in which absolute continuity is known to imply Lipschitz holonomies.
Since $\widehat{\mathfs L}$ is the transverse intersection of $\mathfs L_T$, which has Lipchitz holonomies,
with $\interior(\Sigma)$, we conclude that $\widehat{\mathfs L}$ has Lipschitz holonomies as well.

Given a point $y$ in $\widehat{\mathfs L}$, let $\widehat{\mathfs L}(y)$ be its leaf.
Fix $\widehat{\tau}$ a small segment inside $\interior (\Sigma)$ transverse to $\widehat{\mathfs L}$.  By \cite[Theorem~4.2]{BCS}, the set 
$$
\{y\in \widehat{\tau}: \widehat{\mathfs L}(y)\text{ is not transverse to }\partial^sQ_2\}
$$ 
has zero Hausdorff dimension.
Therefore there is $y\in \widehat{\tau}$ such that
$\widehat{\mathfs L}(y)$ is transverse to $\partial^sQ_2$, and so $W^u(y)$ is transverse
to $W^s(\mathcal O_2)$. Let $z_0 \in \Delta$ such that $y\in W^u(z_0)$. Using that $z_0$
is a density point of $\Delta'$ and that the unstable manifolds (of a bounded size) of points in $\Delta'$ 
vary continuously, we conclude that there is a set $A$ of positive $\mu$--measure such
that $W^u(z')\pitchfork W^s(\mathcal O_2)\neq\emptyset$ for $z'\in A$. By ergodicity,
we conclude that $W^u(z')\pitchfork W^s(\mathcal O_2)\neq\emptyset$ for $\mu$--a.e. $z'$.

Interchanging the roles of stable and unstable manifolds, we prove similarly that 
$W^s(z')\pitchfork W^u(\mathcal O_2)\neq\emptyset$ for $\mu$--a.e. $z'$.
Therefore $\mu \stackrel{h}{\sim} \mathcal O_2$ which, by the transitivity of the homoclinic relation,
gives that $\mu \stackrel{h}{\sim} \nu$.
\end{proof}

\begin{proposition}\label{prop-HC}
If $\mu$ is an equilibrium state of a Hölder continuous positive entropy potential,
then ${\rm supp}(\mu)={\rm HC}(\mu)$. In particular, if there is a $\vf$--invariant
hyperbolic measure with full support homoclinically related to $\mu$,
then $\mu$ has full support.
\end{proposition}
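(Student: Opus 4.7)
The plan is to prove the equality ${\rm supp}(\mu)={\rm HC}(\mu)$ by two inclusions, and then read off the ``in particular'' clause as a one-line consequence.

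For the inclusion ${\rm supp}(\mu)\subset {\rm HC}(\mu)$, I would first observe that $\mu$ is hyperbolic. Indeed, by the preceding Propositions \ref{prop-periodic-point} and \ref{prop.hom-rel} together with Theorem \ref{thm-BCL}, every ergodic equilibrium state of $\psi$ is homoclinically related to a single hyperbolic periodic orbit and is unique, so $\mu$ is ergodic; since $\psi$ is a positive entropy potential, $h_\mu(\vf)>0$, and the Ruelle inequality in the three-dimensional positive-speed setting forces $\mu$ to have two non-zero Lyapunov exponents. Reflexivity of $\stackrel{h}{\sim}$ then yields $\mu\stackrel{h}{\sim}\mu$, and the definition of ${\rm HC}(\mu)$ gives the claimed inclusion.

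For the reverse inclusion ${\rm HC}(\mu)\subset {\rm supp}(\mu)$, I would use Proposition \ref{prop-periodic-point} to fix a hyperbolic periodic orbit $\mathcal{O}\stackrel{h}{\sim}\mu$ and rewrite ${\rm HC}(\mu)={\rm HC}(\mathcal{O})=\overline{W^u(\mathcal{O})\pitchfork W^s(\mathcal{O})}$ via the characterization recorded just after the definition of the topological homoclinic class. By the Birkhoff--Smale theorem, the set of hyperbolic periodic orbits homoclinically related to $\mathcal{O}$ is dense in ${\rm HC}(\mathcal{O})$, so it suffices to show that every such periodic orbit $\mathcal{O}'$ lies in ${\rm supp}(\mu)$. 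For this, I would invoke the BCL symbolic model $\pi_r:\Sigma_r\to M$ over an irreducible topological Markov flow: $\mu$ lifts to an equilibrium state $\widehat{\mu}$ on $\Sigma_r$ for the lifted potential, which is Hölder continuous in the Bowen--Walters metric by the discussion of Remark \ref{rmk-admissible}; hence $\widehat{\mu}$ is a Gibbs measure for an irreducible TMF and therefore has full support on $\Sigma_r$. Every $\mathcal{O}'\stackrel{h}{\sim}\mathcal{O}$ lifts to a periodic orbit of $\Sigma_r$, which lies in ${\rm supp}(\widehat{\mu})$, and so $\mathcal{O}'\subset \pi_r({\rm supp}(\widehat{\mu}))\subset {\rm supp}(\mu)$. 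Taking closures finishes the reverse inclusion.

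The ``in particular'' clause is then immediate: if $\nu$ is a hyperbolic $\vf$--invariant measure with $\nu\stackrel{h}{\sim}\mu$ and ${\rm supp}(\nu)=M$, then by definition $M={\rm supp}(\nu)\subset {\rm HC}(\mu)={\rm supp}(\mu)$, forcing $\mu$ to have full support. The main obstacle I foresee is the symbolic step: one must confirm that the BCL coding is rich enough to realize every periodic orbit of ${\rm HC}(\mathcal{O})$ as the projection of a periodic orbit of $\Sigma_r$, and one must use the Gibbs property of $\widehat{\mu}$ (with the symbolic Hölder regularity of the lifted potential) on a countable-state TMF to obtain full support. Both ingredients are available in \cite{BCL,LMN}, but they need to be applied to the correct irreducible component containing the lift of $\mathcal{O}$, and with the cohomology class of $\psi\circ \pi_r$ verified to be Hölder in the Bowen--Walters sense.
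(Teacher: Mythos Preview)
Your proposal is correct and follows essentially the same route as the paper: both obtain the easy inclusion ${\rm supp}(\mu)\subset{\rm HC}(\mu)={\rm HC}(\mathcal O)$ directly and then use the irreducible BCL coding $\pi_r:\Sigma_r\to M$ for the reverse inclusion. The paper is terser, quoting from \cite{LP25} the two facts ${\rm supp}(\mu)=\overline{\pi_r(\Sigma_r)}$ and $\overline{\pi_r(\Sigma_r)}\supset{\rm HC}(\mathcal O)$, whereas your Gibbs-lift/periodic-orbit argument is exactly the mechanism underlying those facts; note also that you do not actually need every $\mathcal O'\stackrel{h}{\sim}\mathcal O$ to lift, only that $\pi_r$ of the periodic points of $\Sigma_r$ is dense in ${\rm HC}(\mathcal O)$.
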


\begin{proof}
Let $\mathcal O$ be a hyperbolic periodic orbit homoclinically related to $\mu$. 
Then ${\rm supp}(\mu)\subset {\rm HC}(\mu)={\rm HC}(\mathcal O)$.
Letting $\pi_r:\Sigma_r\to M$ be the coding constructed in \cite{BCL}, we have the following facts,
both proved in \cite[Proof of Theorem 1.2]{LP25}:
\begin{enumerate}[$\circ$]
\item ${\rm supp}(\mu)=\overline{\pi_r(\Sigma_r)}$.
\item $\pi_r(\Sigma_r)$ is dense in ${\rm HC}(\mathcal O)$. 
\end{enumerate}
Hence
$$
\overline{\pi_r(\Sigma_r)}={\rm supp}(\mu)\subset {\rm HC}(\mathcal O)\subset \overline{\pi_r(\Sigma_r)},
$$
which implies ${\rm supp}(\mu)={\rm HC}(\mathcal O)={\rm HC}(\mu)$.
The second part of the statement is obvious.
\end{proof}

\begin{proof}[Proof of Theorem \ref{maintheorem}]
Let $\psi:M \to \mathbb{R}$ be a H\"older continuous positive entropy potential (or more generally, a
potential satisfying Remark \ref{rmk-admissible}). Let $\mu$ and $\nu$ be ergodic equilibrium states of $\psi$. 
Since $\psi$ is a positive entropy potential, $\mu$ and $\nu$ have positive entropy. 
By Proposition~\ref{prop.hom-rel}, we have $\mu\stackrel{h}{\sim}\nu$. By Theorem~\ref{thm-BCL},
it follows that $\mu=\nu$. By \cite{LLS}, $\mu$ is Bernoulli times a rotation, and it is Bernoulli when
$\vf$ is a Reeb flow.

Finally, assume that there exists a $\vf$--invariant probability measure $\eta$ with full support and
positive entropy. Again by Proposition~\ref{prop.hom-rel}, we have $\eta\stackrel{h}{\sim}\mu$,
and so Proposition~\ref{prop-HC} implies that $\mu$ has full support. The proof is now complete.
\end{proof}

\subsection{Uniqueness of equilibrium states for $C^r$ flows}\label{sec.finiteregularity}

The $C^\infty$ regularity of the flow in Theorem \ref{maintheorem} is used in two places:
\begin{enumerate}[$\circ$]
\item To guarantee the existence of equilibrium states for continuous potentials \cite{Newhouse-Entropy};
\item To apply the version of Sard Lemma \cite[Theorem~4.2]{BCS} and then obtain
transverse intersections of certain invariant manifolds.
\end{enumerate}
We can state a weaker version of Theorem \ref{maintheorem} for $C^r$ flows ($r>1$).

\begin{theorem}\label{finitereg}
Let $\vf:M\to M$ be a $C^r$ flow $(r>1)$ with positive speed on a closed
three dimensional $C^\infty$ Riemannian manifold $M$, and assume $\vf$ is transitive
and has a Birkhoff section. If $\psi:M\to\R$ is a potential for which every equilibrium state $\mu$
satisfies
\[
h_\mu(\vf) > \frac{1}{r} \max\left\{ \sup\limits_{x\in M} \|d\vf^1(x)\|, \sup\limits_{x\in M} \|d\vf^{-1}(x)\|\right\},
\]
then $\psi$ has at most one equilibrium state.  
\end{theorem}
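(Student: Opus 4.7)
The plan is to follow the blueprint of the proof of Theorem \ref{maintheorem}, tracking the two places where $C^\infty$ regularity was used and either avoiding them or compensating with the high-entropy assumption. Newhouse's existence theorem for equilibrium states is no longer needed, because the statement asserts only uniqueness, so I would start directly with two ergodic equilibrium states $\mu,\nu$ of $\psi$ and aim to prove $\mu=\nu$. The other use of $C^\infty$ --- the symbolic coding of \cite{BCL} and the Sard-type Theorem~4.2 of \cite{BCS} --- is precisely what the entropy threshold is designed to replace in the $C^r$ setting.

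The first substantive step is to check that every equilibrium state $\mu$ of $\psi$ is automatically hyperbolic. By the Ruelle inequality applied to $\varphi^{\pm 1}$, the nonzero Lyapunov exponents of $\mu$ are bounded in absolute value by $\sup_x\log\|d\varphi^{\pm 1}(x)\|$, so the hypothesis strictly exceeds the Ruelle bound that would hold in the presence of a vanishing transverse exponent, forcing $\chi^-(\mu)<0<\chi^+(\mu)$. More importantly, the same threshold places $\mu$ in the ``high-entropy'' regime for which the Ben Ovadia / Lima--Sarig / Buzzi--Crovisier--Lima symbolic dynamics, with its irreducible topological Markov flow $\pi_r:\Sigma_r\to M$, remains available in finite regularity. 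Consequently the content of Theorem \ref{thm-BCL} (and its generalization in \cite{LMN}) carries over: $\psi$ has at most one equilibrium state homoclinically related to each hyperbolic ergodic measure satisfying the threshold.

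Second, I would verify that Propositions \ref{prop-periodic-point}, \ref{prop-topological-intersection}, and \ref{prop.hom-rel} all transfer to the $C^r$ setting without substantial modification. Their ingredients --- Pesin blocks and density-point arguments, the inclination lemma of \cite[Prop.~10.1]{BCL}, the Ledrappier--Young entropy formula, absolute continuity of the unstable lamination with Lipschitz holonomies in codimension one, and the Sard-type Theorem~4.2 of \cite{BCS} --- are all formulated and proved in $C^{1+\alpha}$ regularity. Under the high-entropy hypothesis, the invocation of the BCL coding inside Proposition \ref{prop-periodic-point} is licit, and the topological constructions afterwards are planar arguments internal to the Birkhoff section. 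This yields $\mu\stackrel{h}{\sim}\nu$, and the $C^r$ version of Theorem \ref{thm-BCL} then gives $\mu=\nu$.

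The main obstacle is making precise that the explicit threshold in the statement is the correct one to guarantee that the BCL coding captures every equilibrium state of $\psi$. The factor $\frac{1}{r}$ encodes the Yomdin-type entropy defect produced by the Markov construction in $C^r$ regularity, while $\sup_x\|d\varphi^{\pm 1}(x)\|$ (or its logarithm) supplies a uniform majorant for the Lyapunov exponents that enter this defect. Matching these two pieces of bookkeeping with the hypothesis in the statement is the only genuinely new content beyond the proof of Theorem \ref{maintheorem}; once it is established, the rest of the argument is a direct transcription of that proof.
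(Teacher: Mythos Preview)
You correctly note that Newhouse's existence theorem is no longer needed, but you have misidentified the role of the entropy threshold, and this is a genuine gap. The factor $1/r$ has nothing to do with a Yomdin-type entropy defect in the Markov construction: the Lima--Sarig coding and its irreducible refinement in \cite{BCL} are already available for $C^{1+\beta}$ flows without any entropy threshold, so Theorem~\ref{thm-BCL} carries over unconditionally. The threshold enters solely through \cite[Theorem~4.2]{BCS}, which you list as a routine $C^{1+\alpha}$ ingredient but whose conclusion is regularity-dependent: for a lamination with $C^r$ leaves, that theorem only bounds the Hausdorff dimension of the set of non-transverse leaves by $1/r$, not by $0$. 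In the $C^\infty$ proof of Proposition~\ref{prop.hom-rel} it sufficed that $\widehat\tau\cap\widehat{\mathfs L}$ had \emph{positive} transverse Hausdorff dimension; in $C^r$ one needs this dimension to exceed $1/r$, or no transverse leaf is guaranteed.

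This is precisely what the hypothesis delivers, via Ledrappier--Young rather than via the coding: the stable and unstable dimensions of $\mu$ are $h_\mu(\varphi)/|\lambda^{\mp}(\mu)|$, and since the exponents are dominated by $\sup_x\|d\varphi^{\pm 1}(x)\|$, the displayed inequality forces both dimensions above $1/r$. That single sentence is the paper's entire proof; the remaining steps are a literal repetition of the argument for Theorem~\ref{maintheorem}. Your proposal, by attributing the $1/r$ to the symbolic machinery, would never locate this mechanism and would leave the decisive step of Proposition~\ref{prop.hom-rel}---producing a transverse intersection from the lamination $\widehat{\mathfs L}$---unjustified in finite regularity.
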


\begin{proof}
By the Ledrappier-Young entropy formula, a measure $\mu$ satisfying the above entropy assumption 
has stable/unstable dimension greater than $1/r$, which is enough to apply the version of Sard Lemma \cite[Theorem~4.2]{BCS} and obtain a transverse intersection. From this point,
repeat the arguments in the proof of Theorem \ref{maintheorem}.
\end{proof}

\section{Proof of Theorems~\ref{theoremSRB} and \ref{theorem-NUH}}

Following \cite{BCL}, given $\chi>0$ let $\operatorname{NUH}(\chi)\subset M$
be the non-uniformly hyperbolic locus of $\varphi$,
which is the set of points $x\in M$ having a decomposition
$T_xM=E^s_x\oplus \langle X(x)\rangle\oplus E^u_x$ where vectors in $E^{s/u}_x$ are 
contracted/expanded in a non-uniform way measured by $\chi$, and that satisfy some recurrence
conditions; for the precise definition, see \cite[Section~3.1]{BCL}.
Define the {\em geometric potential} $\psi^u:M\to \mathbb{R}$ as 
$$\psi^u(x)=\left\{\begin{array}{ll}
-\lim\frac{1}{t}\log \|d\varphi^t(x)|_{E^u_x}\| & ,x\in \operatorname{NUH}(\chi)\\ 
0&, \textrm{otherwise}. 
\end{array}\right.
$$
The above definitions are not canonical, since they depend on $\chi$. 
Nevertheless, by the Ruelle inequality and the Pesin entropy formula, every SRB measure for $\vf$ is 
an equilibrium state of $\psi^u$ for $\chi>0$ small enough.

Let $\mu,\nu$ be ergodic SRB measures for $\varphi$. By the Pesin entropy formula,
$\mu$ and $\nu$ have positive entropy. The proof of Theorem~\ref{theoremSRB} consists on 
showing that $\mu$ and $\nu$ are homoclinically related. For that, we prove that
the proof of Proposition~\ref{prop.hom-rel} applies. Recall we are assuming $\vf$
is a $C^r$ flow ($r>1$). 

\begin{claim} In the above context, Proposition~\ref{prop.hom-rel} holds true for ergodic $SRB$ measures.\end{claim}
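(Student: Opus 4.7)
The plan is to trace through the proof of Proposition~\ref{prop.hom-rel} and confirm that each ingredient remains available in the SRB setting with only $C^r$ regularity ($r>1$). First note that SRB measures automatically satisfy the standing positive-entropy hypothesis: since $\mu,\nu$ are hyperbolic of saddle type, Pesin's entropy formula gives $h_\mu(\vf)=\int \lambda^+\,d\mu>0$ and analogously for $\nu$. Hence they are ergodic hyperbolic measures with positive entropy, exactly the context of Proposition~\ref{prop.hom-rel}.

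I would first invoke Proposition~\ref{prop-periodic-point}, whose proof uses the Lima-Sarig symbolic dynamics and hence needs only $C^{1+\beta}$ regularity, to obtain hyperbolic periodic orbits $\mathcal O_1,\mathcal O_2$ disjoint from $\partial\Sigma$ with $\mathcal O_1\stackrel{h}{\sim}\mu$, $\mathcal O_2\stackrel{h}{\sim}\nu$, together with $su$--quadrilaterals $Q_1,Q_2$ meeting the hypotheses of Proposition~\ref{prop-topological-intersection} (a purely topological statement, insensitive to regularity). I would then reproduce the intermediate Pesin construction from the proof of Proposition~\ref{prop.hom-rel} verbatim: an unstable curve $\gamma$ of $\mathcal O_1$ with endpoints on opposite sides of $\partial^s Q_2$, a Pesin block $\Delta'$ with $\mu(\Delta')>0$, a density point $z\in\Delta'$, a small ball $B_r(z)$, and the disintegration of $\mu|_{\Delta'\cap B_r(z)}$ along strong stable plaques. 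Ledrappier-Young \cite{LY2} applied to the Poincaré return map gives $\dim^s(\mu)=h_\mu(\vf)/|\lambda^-|>0$, so a set $\tau\subset W^{ss}_r(w)\cap \Delta'$ of positive Hausdorff dimension exists; pushing $\bigcup_{z'\in\tau}W^u_\delta(z')$ to the section yields the lamination $\widehat{\mathfs L}$ of positive transverse Hausdorff dimension with Lipschitz holonomy (the latter from absolute continuity of the codimension-one unstable lamination inside the Pesin block).

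The main obstacle is the last step, the invocation of the Sard-type statement \cite[Theorem~4.2]{BCS} to produce a leaf of $\widehat{\mathfs L}$ transverse to $\partial^s Q_2$. This is precisely where Theorem~\ref{maintheorem} needed $C^\infty$ regularity and Theorem~\ref{finitereg} required the high-entropy hypothesis $h_\mu>\tfrac{1}{r}\max\{\cdots\}$ to push the transverse dimension above the exceptional-set threshold of \cite[Theorem~4.2]{BCS}. For SRB measures the feature that rescues the argument is that the unstable conditionals of $\mu$ are absolutely continuous with respect to Lebesgue on leaves; combined with the Lipschitz stable holonomy, this upgrades $\widehat{\mathfs L}$ to a family that is ``Lebesgue-thick'' transversely, so a generic leaf is transverse to the $C^r$ curve $\partial^s Q_2$ regardless of how small $\dim^s(\mu)$ is. The formal translation of this into the symbolic setting goes through Remark~\ref{rmk-admissible}: the geometric potential, of which SRB measures are equilibrium states, has a Hölder continuous lift in Bowen-Walters metric, so the codings of \cite{BCL,LMN} apply to these measures in the $C^r$ category. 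Once a single transverse intersection $W^u(z_0)\pitchfork W^s(\mathcal O_2)\neq\emptyset$ is obtained, ergodicity of $\mu$ and continuous variation of unstable manifolds on $\Delta'$ propagate it to $\mu$--a.e.\ point; the symmetric argument with stable/unstable roles swapped applies because $\nu$ is also SRB; transitivity of $\stackrel{h}{\sim}$ then gives $\mu\stackrel{h}{\sim}\mathcal O_2\stackrel{h}{\sim}\nu$, completing the proof.
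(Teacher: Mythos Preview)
Your proposal correctly isolates the Sard step as the only place where $C^\infty$ regularity was needed, but your fix is oriented the wrong way. You build $\widehat{\mathfs L}$ with \emph{unstable} leaves, indexed by a set $\tau\subset W^{ss}_r(w)\cap\Delta'$; the transversal $\widehat\tau$ to such a lamination lies in the stable direction, and the Hausdorff dimension of $\widehat\tau\cap\widehat{\mathfs L}$ is (up to Lipschitz holonomy) that of $\tau$, namely the \emph{stable} dimension $h_\mu(\vf)/|\lambda^-|$. The SRB property says the \emph{unstable} conditionals are absolutely continuous; this forces $\dim^u(\mu)=1$ but says nothing about the stable direction, so your phrase ``Lebesgue-thick transversely'' conflates leafwise absolute continuity with transverse thickness. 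Since $\dim^s(\mu)$ can be below $1/r$, the exceptional set from \cite[Theorem~4.2]{BCS} may contain every leaf of your lamination, and the argument stalls exactly where it did before.

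The paper's remedy is to swap the roles of $s$ and $u$ in the construction of Proposition~\ref{prop.hom-rel}: build $\widehat{\mathfs L}$ out of pieces of \emph{stable} manifolds, so that the transversal $\widehat\tau$ is an unstable curve and the lamination meets $\partial^u Q_2$ rather than $\partial^s Q_2$. Now the SRB hypothesis is genuinely transverse, and one can choose $\widehat\tau$ with $\dim_H(\widehat\tau\cap\widehat{\mathfs L})=1>1/r$, so \cite[Theorem~4.2]{BCS} yields a leaf transverse to $\partial^u Q_2$, hence $W^s(x)\pitchfork W^u(\mathcal O_2)\neq\emptyset$ for $\mu$--a.e.\ $x$. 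The second half of the homoclinic relation then comes from rerunning the same argument with $\mu$ and $\nu$ interchanged (using that $\nu$ is also SRB), not from another stable/unstable swap. Finally, your appeal to Remark~\ref{rmk-admissible} and the symbolic coding is misplaced here: that remark is invoked \emph{after} the Claim, to feed the geometric potential into Theorem~\ref{thm-BCL}; it plays no role in producing the transverse intersection.
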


\begin{proof}[Proof of Claim]
Following the same construction of Proposition~\ref{prop.hom-rel} applied to stable manifolds,
we obtain a lamination $\widehat{\mathfs{L}}$ in $\interior (\Sigma)$ whose leaves are
pieces of stable manifolds. This is a lamination with $C^r$ leaves and Lipschitz holonomies. By 
\cite[Theorem~4.2]{BCS}, the Hausdorff dimension of the set of 
$$
\{y\in \widehat{\tau}: \widehat{\mathfs L}(y)\text{ is not transverse to }\partial^uQ_2\}
$$ 
is at most $1/r$. Using that $\mu$ is an SRB measure, its disintegration on unstable
manifolds is absolutely continuous, hence we can take $\widehat\tau$ so that
$\widehat\tau\cap\widehat{\mathfs L}$ has Hausdorff dimension equal to one (in particular, greater
than $1/r$). The rest of the argument follows ipsis literis the proof of Proposition~\ref{prop.hom-rel}.
The conclusion is that for $\mu$--a.e. $x\in M$ and $\nu$--a.e. $y\in M$ we have
$W^s(x)\pitchfork W^u(y)\neq \emptyset$. Interchanging the roles of $\mu$ and $\nu$, 
it follows that $\mu \stackrel{h}\sim \nu$.
\end{proof}

Now we conclude the proof of Theorem~\ref{theoremSRB}.
Choose $\chi>0$ small enough so that $\mu,\nu$ are equilibrium state of $\psi^u$.
As explained in Remark~\ref{rmk-admissible}, Theorem~\ref{thm-BCL} holds for
$\psi^u$, since the lift of this potential to the symbolic space given by Theorem~\ref{thm-BCL}
is cohomologous to a Hölder continuous potential, and so $\mu=\nu$.

The ergodicity part of Theorem~\ref{theorem-NUH} follows from Theorem~\ref{theoremSRB} and 
the fact that ergodic components of the Liouville measure are SRB measures whenever the
Liouville measure is  hyperbolic. The Bernoulliness of the measure follows from the main result in \cite{LLS}.

\bibliographystyle{alpha}
\bibliography{bibliography}{}

\end{document}